\documentclass[11pt]{amsart}
\usepackage[utf8]{inputenc}

\usepackage{amsthm}
\usepackage{amsfonts}         
\usepackage{amsmath}
\usepackage{amssymb}
\usepackage{enumerate}
\usepackage[normalem]{ulem}
\usepackage{comment}
\usepackage[margin=1in,footskip=0.25in]{geometry}
\usepackage{hyperref}

\usepackage{xcolor}

\newcommand{\R}{\mathbb{R}}
\newcommand{\C}{\mathbb{C}}

\newcommand{\N}{\mathbb{N}}

\newcommand{\Z}{\mathbb{Z}}

\newcommand{\conv}{\operatorname{Conv}}
\newcommand{\supp}{\operatorname{supp}}
\renewcommand{\d}[1]{\ensuremath{\operatorname{d}\!{#1}}}
\DeclareMathOperator{\sign}{sign}

\DeclareMathOperator{\tr}{tr}
\DeclareMathOperator{\Var}{Var}

\DeclareMathOperator{\length}{length}

\newcommand*\pFqskip{8mu}
\catcode`,\active
\newcommand*\pFq{\begingroup
        \catcode`\,\active
        \def ,{\mskip\pFqskip\relax}%
        \dopFq
}
\catcode`\,12
\def\dopFq#1#2#3#4#5{%
        {}_{#1}F_{#2}\biggl[\genfrac..{0pt}{}{#3}{#4};#5\biggr]%
        \endgroup
}

\theoremstyle{plain}
\newtheorem{thm}[subsection]{Theorem}
\newtheorem{lem}[subsection]{Lemma}
\newtheorem{prop}[subsection]{Proposition}
\newtheorem{conj}[subsection]{Conjecture}
\newtheorem{defn}[subsection]{Definition}
\newtheorem{kor}[subsection]{Corollary}

\theoremstyle{definition}

\newtheorem{kys}[subsection]{Question}

\title{Convolution comparison measures}
\author{Otte Heinävaara}
\address{The Division of Physics, Mathematics and Astronomy, California Institute of Technology, Pasadena, CA 91125}
\email{oeh@caltech.edu}
\date{February 10, 2026}

\begin{document}

\begin{abstract}
    We give a precise functional comparison between classical and free convolutions. If $\mu$ and $\nu$ are compactly supported probability measures, we show that the expectation of $f$ over the classical convolution $\mu * \nu$ is at least the expectation of $f$ over the free convolution $\mu \boxplus \nu$, as long as the fourth derivative of $f$ is non-negative. Conversely, the non-negativity of the fourth derivative is necessary for such a comparison.

    This comparison is based on the positivity of a related measure on $\R^{2}$, which we dub the \textit{convolution comparison measure}. We give an expression for this measure using a curious identity involving Hermitian matrices.
\end{abstract}

\maketitle

\section{Introduction}

\subsection{Comparing classical and free convolutions}

\sloppy Free independence was introduced by Voiculescu \cite{MR799593} in 1983 as an analogue of (classical) independence in tensor products, but for free products.

Concretely, if $\mathcal{A}_{1}, \mathcal{A}_{2}$ are unital $C^{*}$-algebras with states $\tau_{1}, \tau_{2}$, and $a_i \in \mathcal{A}_i$ are self-adjoint, we can think of $a_i$ as a compactly-supported real-valued random variable, with moments $\tau(a_{i}^{k})$. With the state $\tau := \tau_{1} \otimes \tau_{2}$ in the tensor product $\mathcal{A}_{1} \otimes \mathcal{A}_{2}$, the random variables become independent in the classical sense. In particular, the law of the element $a_1 + a_2 := a_{1} \otimes 1 + 1 \otimes a_{2}$ depends only on the laws of $a_{1}$ and $a_{2}$; it equals their (classical) convolution.

Voiculescu observed that there exists a state $\tau$ in the free product $\mathcal{A}_{1} * \mathcal{A}_{2}$ with the following property: if $a_{i}^{(j)} \in \mathcal{A}_{i}$ is such that $\tau_{i}\left(a_{i}^{(j)}\right) = 0$ for all $i \in \{1, 2\}$ and $j \in \{1, \dots, n\}$, then
\begin{align}\label{freenes_condition}
    \tau\left(a_{1}^{(1)} a_{2}^{(1)} a_{1}^{(2)} a_{2}^{(2)} \cdots a_{1}^{(n)} a_{2}^{(n)}\right) = 0.
\end{align}
This guides the way to a definition of the free independence of subalgebras of a \mbox{$C^{*}$-algebra}. If $a_{1}$ and $a_{2}$ are self-adjoint and freely independent in this sense, Voiculescu \cite{MR839105} showed that the law of $a_{1} + a_{2}$ again depends only on the laws of $a_{1}$ and $a_{2}$; the law of $a_1 + a_2$ is the free convolution.

Let $\mu$ and $\nu$ be compactly-supported real-valued random variables. Calculation of the classical convolution is often a straightforward task: if $\mu$ and $\nu$ are absolutely continuous w.r.t. the Lebesgue measure, then so is their classical convolution $\mu * \nu$, and its density is the convolution of the respective densities. If $\mu$ and $\nu$ are finitely supported, then so is their convolution; and the support and probabilities of the convolution admit a simple description. (See \cite[2.1.2]{MR3930614} for related computations.)

For the free convolution $\mu \boxplus \nu$, the story is considerably more complicated. Voiculescu studied the free convolution with the help of the $R$-transform \cite{MR839105}. The $R$-transform analytically captures freeness, much like characteristic functions do in the study of commutative random variables. Apart from some special cases \cite[Chapter 3]{MR3585560}, however, the $R$-transform avoids having an explicit expression, and so does the free convolution. For example, there is no simple formula for the free convolution of finitely-supported probability measures, unless both measures have at most two points in their support. Bercovici and Voiculescu \cite[Theorem 7.4]{MR1639647} showed that, in general, the free convolution of finitely-supported measures is not finitely supported, nor does it have any point masses.

In the classical setting, the support of $\mu * \nu$ is the Minkowski sum of the supports of the summands. Hence, the convex hull $\conv(\supp(\mu * \nu))$ also equals $\conv(\supp(\mu)) + \conv(\supp(\nu))$; this fact also follows from the $C^{*}$-algebra description of the convolution.

For the free convolution, it still follows from operator-theoretic considerations that $\conv(\supp(\mu \boxplus \nu))$ is a subset of $\conv(\supp(\mu)) + \conv(\supp(\nu))$; however, the inclusion is strict in general. In this sense, we can say that the free convolution is less spread out than classical convolution.

Moments of the free convolution $\mu \boxplus \nu$---while a lot more involved than in the classical setting---do admit a combinatorial description in terms of the moments of $\mu$ and $\nu$. Speicher \cite{MR1268597} observed that the moments of the free convolution can be expressed in terms of the so-called free cumulants. The resulting expressions, however, become increasingly complex for higher moments, and their manipulation becomes untenable.

Let $m_{k}(\mu)$ be the $k$th moment of $\mu$. It is straightforward to verify that the first three moments of the classical and free convolutions agree, but they differ from the fourth moment onwards in general. In fact, $m_{4}(\mu * \nu)$ is (almost) always strictly bigger than $m_{4}(\mu \boxplus \nu)$; the difference between the two is $2 \Var(\mu) \Var(\nu)$, where $\Var$ denotes the variance. Similarly,
\begin{align*}
    \limsup_{k \to \infty} m_{2 k}(\mu * \nu)^{1/(2 k)} \geq \limsup_{k \to \infty} m_{2 k}(\mu \boxplus \nu)^{1/(2 k)};
\end{align*}
this is a reformulation of the claim on the inclusion of the convex hulls of the supports.

It is a natural question to ask whether a similar ordering is true for higher even moments.

\subsection{Convolution comparison measures}

Our first result gives a precise ordering of classical and free convolutions, which, in particular, immediately answers our question on ordering the moments in the affirmative.

\begin{thm}\label{thm1}
    Let $\mu$ and $\nu$ be compactly-supported probability measures on $\R$. Then, for any $f \in C^{4}(\R)$ with $f^{(4)} \geq 0$, we have
    \begin{align}\label{main_ineq}
        \int f(t) \d{(\mu * \nu)}(t) \geq \int f(t) \d{(\mu \boxplus \nu)}(t),
    \end{align}
    where $*$ and $\boxplus$ denote the classical and free convolutions respectively. Conversely, if $f \in C^{4}(\R)$ is such that (\ref{main_ineq}) holds for any $\mu$ and $\nu$, then the fourth derivative of $f$ is non-negative.
\end{thm}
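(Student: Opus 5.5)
The plan is to construct a finite positive measure $\rho$ on $\R^{2}$ (the convolution comparison measure) such that for every polynomial $f$,
\begin{align*}
    \int f \d{(\mu * \nu)} - \int f \d{(\mu \boxplus \nu)} = \int_{\R^{2}} [x_{1}, x_{2}, x_{3}, x_{4}]\text{-type fourth divided differences of } f \, \d{\rho},
\end{align*}
with the difference then expressed as an integral of $f^{(4)}$ against a non-negative kernel. Concretely, the difference is a linear functional $L(f)$ that vanishes on cubic polynomials, since the first three moments agree. By the Peano kernel theorem it can therefore be written as $L(f) = \int f^{(4)}(s) K(s) \d{s}$ with $K(s) = L\bigl((\cdot - s)_{+}^{3}\bigr)/6$; this holds for polynomials by linearity and extends to all $f \in C^{4}$ by approximation on a compact interval containing both supports. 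The forward direction thus reduces to showing $K \geq 0$, that is,
\begin{align*}
    \int (t-s)_{+}^{3} \d{(\mu * \nu)}(t) \geq \int (t-s)_{+}^{3} \d{(\mu \boxplus \nu)}(t)
\end{align*}
for every $s$. More robustly, I would aim for a two-variable representation $L(f) = \int\!\!\int [f; x, x, y, y] \d{\rho}(x,y)$ with $\rho \geq 0$, where $[f;\cdot]$ is the fourth-order divided difference. This is the natural shape given $m_{4}(\mu*\nu) - m_{4}(\mu\boxplus\nu) = 2\Var(\mu)\Var(\nu)$, which corresponds to total mass $\rho(\R^{2}) = 2\Var\mu\Var\nu$.

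To get $\rho$, I would use random matrix models. The free convolution is the limit of the spectral distribution of $A + UBU^{*}$ with $U$ Haar unitary, and the classical convolution is the spectral distribution of $A\otimes 1 + 1 \otimes B$. An alternative is a finite interpolation: with $A, B$ diagonal $N\times N$, compare the averaged $\tfrac1N\mathbb{E}\tr f(A + UBU^{*})$ to $\tfrac{1}{N^{2}}\sum_{i,j} f(a_{i}+b_{j})$. The second is the same as $\tfrac1N\mathbb{E}\tr f(A+PBP^{*})$ over random permutations $P$. One then needs an exact matrix identity expressing the difference as an integral of divided differences of $f$ with positive weights, presumably the ``curious identity involving Hermitian matrices'' mentioned in the abstract. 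I would try a derivation along a path $B_{\theta}$ interpolating from a commuting (permuted) configuration to a Haar-rotated one, differentiating twice via the Daleckii--Krein formula, so that second-order divided differences appear twice. After averaging over the Haar measure, the symmetry should produce $[f; x,x,y,y]$ weighted by $|$matrix entries$|^{2}$ products, which are non-negative. Passing to $N \to \infty$ and using weak compactness of the weights (their total mass is bounded by $2\Var\mu\Var\nu$) gives $\rho$ for discrete $\mu, \nu$, and then for general compact ones by approximation.

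The converse is more elementary. Taking $f$ with $f^{(4)}(t_{0}) < 0$ at some point, I would choose $\mu = \nu$ to be symmetric Bernoulli measures $\tfrac12(\delta_{-\varepsilon} + \delta_{\varepsilon})$ shifted to $t_{0}/2$. Taylor expanding $f$ around $t_{0}$, the classical and free expectations agree up to cubic terms. The quartic term gives the difference
\begin{align*}
    \frac{f^{(4)}(t_{0})}{24}\cdot 2\varepsilon^{4} + o(\varepsilon^{4}),
\end{align*}
since the fourth central moments differ by $2\Var\mu\Var\nu = 2\varepsilon^{4}$ and the remainder is controlled by continuity of $f^{(4)}$ on shrinking supports. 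For small $\varepsilon$ this is negative, contradicting \eqref{main_ineq}.

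The main obstacle is the positivity of $\rho$, or equivalently of the Peano kernel $K$, for arbitrary $\mu, \nu$. A direct analytic attack on $K(s)$ via subordination functions is another option: write $\mu\boxplus\nu$ through $\omega_{1}, \omega_{2}$ and compare with $\int (t-s)_{+}^{3}$ using Stieltjes inversion. However, I expect the matrix identity route to be what actually makes positivity manifest, and finding that identity is the crux.
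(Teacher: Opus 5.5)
There is a genuine gap: the forward direction is never proved. Your converse is correct. With $\mu=\nu=\tfrac12(\delta_{t_0/2-\varepsilon}+\delta_{t_0/2+\varepsilon})$, the fourth central moments of $\mu*\nu$ and $\mu\boxplus\nu$ (an arcsine law) are $8\varepsilon^4$ and $6\varepsilon^4$. The cubic Taylor part integrates identically, and the remainder is controlled by the modulus of continuity of $f^{(4)}$ near $t_0$. This is also the paper's leading-order mechanism.

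The Peano-kernel reduction is correct too, but it only restates the claim, since $K(s)\ge0$ is exactly the inequality for $f=(t-s)_+^3$. The forward direction therefore rests entirely on a positivity statement that you do not prove, and the random-matrix route as sketched does not supply it.
\begin{itemize}
\item It needs either the finite-$N$ inequality $\frac1N\mathbb{E}\tr f(A+UBU^*)\le\frac1{N^2}\sum_{i,j}f(a_i+b_j)$ for every $f$ with $f^{(4)}\ge0$, or the stronger monotonicity along your interpolating path. Neither is argued. Moreover, the finite-$N$ Haar average is not the free convolution: its fourth-moment deficit is $\frac{2N^2}{N^2-1}\sigma_A^2\sigma_B^2$, not $2\sigma_A^2\sigma_B^2$, where $\sigma_A^2,\sigma_B^2$ are the spectral variances. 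So each $N$ is a separate unproved inequality.
\item More fundamentally, a second derivative along $B'(s)=e^{isH}B'e^{-isH}$ via Daleckii--Krein gives $\sum_{i,j}[\lambda_i,\lambda_j]_{f'}|Y_{ij}|^2+\tr(f'(X)Z)$, with $Y=i[H,B']$ and $Z=-[H,[H,B']]$. That is second-order information about $f$ with no definite sign, and $f^{(4)}\ge0$ gives no control over it.
\item Turning this, after Haar averaging, into fourth-order divided differences $[f;x,x,y,y]$ with non-negative weights requires an exact cancellation. You assert it (``the symmetry should produce'') but never exhibit it, and that cancellation is the theorem, not a technical step.
\end{itemize}

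What is missing is an explicit, manifestly non-negative representation of the comparison functional. The paper's comes from a different source than Haar averaging:
\begin{itemize}
\item It writes $L$ on monomials as $\sum_l(-1)^l(2l+3)I_l^\mu I_l^\nu$, using moment--cumulant identities and Saalsch\"utz's theorem.
\item For finitely supported $\mu$, it shows $I_l^\mu(t^k)=\int a^kC_l^{3/2}(2b-1)\,\omega_{A,vv^*}(a,b)\,da\,db$. Here $\omega_{A,B}(a,b)=\frac{1}{2\pi}\sum_i|\Im\lambda_i((A-aI)(B-bI))|\ge0$, with $A$ and $v$ chosen so that $m_k(\mu)=\langle A^kv,v\rangle$, and the moments of $\omega$ are computed from tracial joint spectral measures.
\item It sums the Gegenbauer series to the positive kernel $\min\{((1-b_\mu)(1-b_\nu))^{-1},(b_\mu b_\nu)^{-1}\}$. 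This gives a non-negative density $w_{\mu,\nu}$ with $L(f)=\int f^{(4)}(x+y)\,w_{\mu,\nu}(x,y)\,dx\,dy$, and approximation finishes the proof.
\end{itemize}
The ``curious identity'' in the abstract is this moment formula for $\omega_{A,B}$, not an identity for $A+UBU^*$.
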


In particular, for any $k \in \mathbb{N}$,
\begin{align}\label{moment_comparison}
    m_{2 k}(\mu * \nu) \geq m_{2 k}(\mu \boxplus \nu).
\end{align}

Theorem~\ref{thm1} can be superficially restated to be about the positivity of an auxiliary measure on $\R$ derived from $\mu$ and $\nu$---indeed, the measure arising from the linear functional sending $f^{(4)}$ to ``left-hand side - right-hand side'' of (\ref{main_ineq}). The key insight of this paper is that we can extract this positivity from a measure on $\R^{2}$. This positive measure, which we'll call the \textit{convolution comparison measure} of $\mu$ and $\nu$, is supported on $\conv(\supp(\mu)) \times \conv(\supp(\nu))$. Theorem~\ref{thm1} can be deduced from the positivity of the slices of this convolution comparison measure.

\begin{thm}\label{thm2}
    Let $\mu$ and $\nu$ be compactly-supported probability measures on $\R$. Then, there exists a positive Radon measure $\widetilde{m}_{\mu, \nu}$ that is absolutely continuous w.r.t. the Lebesgue measure $m_{2}$ and is supported on  $\conv(\supp(\mu)) \times \conv(\supp(\nu)) \subset \R^{2}$, such that, for any $a, b \in \R \setminus \{0\}$ and $f \in C^{4}(\R)$, we have
    \begin{align}\label{general_identity}
        \widetilde{m}_{\mu, \nu}((x, y) \mapsto f^{(4)}(a x + b y)) = \frac{1}{a^{2} b^{2}}\left(\int f(t) d (a \mu * b \nu)(t) - \int f(t) d (a\mu \boxplus b\nu)\right).
    \end{align}
    $\widetilde{m}_{\mu, \nu}$ is the \emph{convolution comparison measure} of $\mu$ and $\nu$.
\end{thm}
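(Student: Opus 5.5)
The plan is to reduce (\ref{general_identity}) to a family of generating test functions, build $\widetilde{m}_{\mu,\nu}$ from finite-dimensional matrix models, and then pass to a limit. Both sides of (\ref{general_identity}) are linear in $f$ and depend only on the values of $f$ near the compact set $a\,\conv(\supp\mu)+b\,\conv(\supp\nu)$. So it suffices to verify the identity for a class of $f$ that is dense for $C^4$ convergence on compacts, modulo cubic polynomials. Cubics are harmless: $f^{(4)}=0$ for them, and the first three moments of $a\mu*b\nu$ and $a\mu\boxplus b\nu$ agree.

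I would use resolvents $f_z(t)=(z-t)^{-1}$, $z\in\C\setminus\R$, for which $f_z^{(4)}(t)=24(z-t)^{-5}$. The target identity then reads
\begin{align*}
24\int \frac{\d{\widetilde{m}_{\mu,\nu}}(x,y)}{(z-ax-by)^{5}} = \frac{1}{a^{2}b^{2}}\bigl(G_{a\mu*b\nu}(z)-G_{a\mu\boxplus b\nu}(z)\bigr),
\end{align*}
where $G$ is the Cauchy transform. Expanding both sides in powers of $1/z$ turns this into a moment identity: for every $k\ge 0$, the pairing of $\widetilde{m}_{\mu,\nu}$ against $(ax+by)^k$ must equal a prescribed polynomial in $a,b$. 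Because the left side is homogeneous in $(a,b)$, the measure is pinned down by its mixed moments $\int x^iy^j\,\d{\widetilde m}$.

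Uniqueness is automatic from this: a compactly supported measure is determined by these mixed moments. The real content is \emph{existence}, i.e.\ that the mixed moments extracted from the coefficient of $a^{i+2}b^{j+2}$ in $m_{k+4}(a\mu*b\nu)-m_{k+4}(a\mu\boxplus b\nu)$ are the moments of a positive, absolutely continuous measure on $\conv(\supp\mu)\times\conv(\supp\nu)$. Two facts about this coefficient need checking from the free cumulant formulas. First, it is divisible by $a^2b^2$: this is the statement that only non-crossing versus crossing partitions with at least two blocks of each colour contribute. Second, it defines a consistent moment sequence.

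For positivity I would use finite matrix models. Take $\mu,\nu$ to be empirical measures of Hermitian $n\times n$ matrices $A,B$. Classical convolution corresponds to $A\otimes I+I\otimes B$ with the normalized trace, and free convolution is the $n\to\infty$ limit of $\mathbb{E}\,\tr f(A_n+UB_nU^*)$ with $U$ Haar unitary. I would look for an exact identity, at fixed $n$, of the form
\begin{align*}
\tfrac1{n^2}\tr f(aA\otimes I+I\otimes bB)-\mathbb{E}\tfrac1n\tr f(aA+UbBU^*) = a^2b^2\int f^{(4)}(ax+by)\,\d{\rho_n}(x,y)+o(1),
\end{align*}
with $\rho_n\ge 0$. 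The candidate route is a Taylor/Duhamel expansion along the interpolation $A\otimes I+ I\otimes B\to$ a rotated pair, together with Weingarten calculus. The $a^2b^2$ factor should come from the second-order terms in each variable, which encode the variance discrepancy $2\Var\Var$ visible already at $k=4$. The result would be a divided-difference representation: $f^{(4)}$ integrated against products of B-spline-type kernels in the eigenvalues of $A$ and of $B$. These kernels are nonnegative and supported in the convex hulls, which gives both positivity and the support claim.

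Granting that representation, the remaining steps are routine. Tightness of $\rho_n$ follows from the support bound, so weak-$*$ limits exist. Applying Step 1 to the limit yields (\ref{general_identity}) for general compactly supported $\mu,\nu$, using weak continuity of $*$ and $\boxplus$. Absolute continuity comes from the smoothing of the spline kernels, or from showing the two-variable transform is bounded near the real plane.

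The main obstacle is finding the exact positive matrix identity in the previous step. If a direct Weingarten computation becomes unmanageable, my first fallback is the subordination functions $\omega_1,\omega_2$ of $\mu\boxplus\nu$. I would write $G_{a\mu\boxplus b\nu}(z)=G_{a\mu}(\omega_1(z))$ and try to express the difference of Cauchy transforms as a double Cauchy-type integral whose imaginary part can be shown to have a sign by Stieltjes inversion.
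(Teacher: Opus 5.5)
Your reduction to moments is sound and matches the spirit of the paper's Step~1. Divisibility of the moment difference by $a^{2}b^{2}$ (which the paper also notes) defines $\widetilde{m}_{\mu,\nu}$ as a linear functional on bivariate polynomials. The ``consistency'' you list is then automatic, and uniqueness and the final weak-$*$ limit are routine.

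\textbf{The gap.} The whole content of Theorem~\ref{thm2} is that this functional is a \emph{positive}, absolutely continuous measure. In your proposal this rests on a finite-$n$ identity that you only hypothesize (``I would look for'', ``granting that representation''). You give no mechanism that produces $\rho_n\ge 0$. A Duhamel expansion combined with Weingarten calculus yields divided differences with signed coefficients, since the Weingarten function is not of one sign. Nothing in the proposal explains why the difference of the two expectations should recombine into nonnegative spline kernels. Likewise, absolute continuity after the limit needs a density bound uniform in $n$, and you do not have one.

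The subordination fallback does not close the gap either. Stieltjes inversion of a function of the single variable $z$, for fixed $(a,b)$, only sees the pushforward of $\widetilde{m}_{\mu,\nu}$ under $(x,y)\mapsto ax+by$. So at best it proves Theorem~\ref{thm1}. Positivity of all such one-dimensional pushforwards does not imply positivity of a planar measure. For example, the uniform probability measure on a circle minus a sufficiently small smooth bump at its centre is signed, yet every projection is nonnegative. So this route does not give Theorem~\ref{thm2}.

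\textbf{The missing idea} is an explicit, manifestly nonnegative planar density. The paper obtains it in three steps.
\begin{enumerate}[(i)]
\item It rewrites the moment functional as $\sum_{l\ge 0}(-1)^{l}(2l+3)\,I_l^{\mu}\otimes I_l^{\nu}$, using Lemma~\ref{cumulant_moment_identities}, Lemma~\ref{identity_lemma} and Saalsch\"utz's theorem.
\item For finitely supported $\mu$ with $m_k(\mu)=\langle A^{k}v,v\rangle$, it identifies
$I_l^{\mu}(t\mapsto t^{k})=\int a^{k}C_l^{3/2}(2b-1)\,\omega_\mu(a,b)\,\d{a}\,\d{b}$, where $\omega_\mu(a,b)=\frac{1}{2\pi}\sum_i\bigl|\Im\lambda_i\bigl((A-aI)(vv^{*}-bI)\bigr)\bigr|\ge 0$ comes from tracial joint spectral measures (Lemma~\ref{weird_moment_lemma}, Proposition~\ref{prop2}).
\item It sums the alternating Gegenbauer series to the positive kernel $g(b_\mu,b_\nu)=\min\bigl(\frac{1}{(1-b_\mu)(1-b_\nu)},\frac{1}{b_\mu b_\nu}\bigr)$ (Proposition~\ref{prop3}). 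Hence $\widetilde{m}_{\mu,\nu}$ has density $\int_0^1\!\int_0^1 g\,\omega_\mu\,\omega_\nu\,\d{b_\mu}\,\d{b_\nu}\ge 0$.
\end{enumerate}
The pointwise bound $\omega_\mu\le\frac{1}{\pi}\length(\conv(\supp(\mu)))$ from Proposition~\ref{pointwise_bound}, together with integrability of $g$, gives a density bound that is uniform along finitely supported approximations. This bound is what survives the weak-$*$ limit and yields absolute continuity. Without some positivity mechanism of this kind, your argument only produces a possibly signed moment functional.
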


Here, $a \mu$ is the probability measure satisfying $(a \mu)(a U) = \mu(U)$ for $U \subseteq \R$.

Using (\ref{general_identity}), we can check that $\widetilde{m}_{\mu, \nu}(\R^{2}) = 1/12 \Var(\mu) \Var(\nu)$; so, the measure vanishes if and only if one of the probability measures is constant.

One could give slightly generalized statements for Theorems~\ref{thm1} and \ref{thm2} by noting that $f$ need not be defined outside a (suitable) compact set. This generalization is superficial, however, as such $C^{4}$ functions can be extended to all of $\R$ anyhow.

Theorem~\ref{thm1} was previously established in the special case of $f(t) = -\log|z - t|$ (for any parameter $z$ lying outside $\conv(\supp(\mu)) + \conv(\supp(\nu))$) by Arizmendi and Johnston \cite{arizmendi2023free}. Arizmendi and Johnston derive their result as a by-product of the study of entropic optimal transport; they further rely on connections between the free convolution and quadrature formulas from the theory of random matrices \cite{MR3892446}. It is unclear if their approach can be generalized to prove Theorem~\ref{thm1}.

As a corollary of Theorem~\ref{thm2}, we also obtain the following concrete comparison criterion. For a bivariate real polynomial $p(x, y) = \sum_{i, j} c_{i, j} x^{i} y^{j}$, consider the symmetric non-commutative lift
\begin{align*}
    \widetilde{p}(a, b) = \sum_{i, j} c_{i, j} m_{i, j}(a, b),
\end{align*}
where $m_{i, j}(a, b)$ is the average of all the non-commutative reorderings of $a^{i} b^{j}$. Equivalently, the $m_{i,j}(a, b)$ are defined implicitly by the relation
\begin{align*}
    (t a + b)^{n} = \sum_{k = 0}^{n} \binom{n}{k} t^{k} m_{k, n - k}(a, b),
\end{align*}
when $a$ and $b$ are non-commuting, and $t$ is a scalar.

\begin{kor}\label{cor1}
    Let $p(x, y) = \sum_{i, j} c_{i, j} x^{i} y^{j}$ be a bivariate real polynomial. The following are equivalent:
    \begin{enumerate}[(a)]
        \item \label{cond1}The expectation of $\widetilde{p}(a, b)$ for independent $a$ and $b$ is at least the expectation of $\widetilde{p}(a, b)$ for freely independent $a$ and $b$, where $a$ and $b$ are bounded self-adjoint.
        \item \label{cond2}For any $x, y \in \R$,
        \begin{align*}
            \frac{\d{}^{2}}{\d{x}^{2}}\frac{\d{}^{2}}{\d{y}^{2}} p(x, y) = \sum_{i, j} c_{i, j} i (i - 1) j (j - 1) x^{i - 2} y^{j - 2} \geq 0.
        \end{align*}
    \end{enumerate}
\end{kor}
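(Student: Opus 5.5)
The plan is to reduce Corollary~\ref{cor1} to Theorem~\ref{thm2} by a polarization argument. Fix $\mu,\nu$ compactly supported and let $a,b$ have laws $\mu,\nu$. The key observation is that the expectation of $\widetilde{p}(a,b)$ is determined by the one-variable moments $\tau((ta+sb)^{n})$. Indeed, for a homogeneous component of degree $n$ we have
\begin{align*}
(ta+b)^{n}=\sum_{k}\binom{n}{k}t^{k}m_{k,n-k}(a,b),
\end{align*}
so the value $\tau(m_{k,n-k}(a,b))$ is the coefficient of $t^{k}$ in $\binom{n}{k}^{-1}\int u^{n}\,\d{(t\mu\ast\nu)}(u)$, respectively $\int u^{n}\,\d{(t\mu\boxplus\nu)}(u)$. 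Hence the difference
\begin{align*}
D_{\mu,\nu}(p):=\mathbb{E}_{\mathrm{classical}}\,\widetilde{p}(a,b)-\mathbb{E}_{\mathrm{free}}\,\widetilde{p}(a,b)
\end{align*}
is linear in $p$. On the monomial $x^{k}y^{n-k}$ it equals $\binom{n}{k}^{-1}$ times the $t^{k}$-coefficient of $\int u^{n}\,\d{(t\mu\ast\nu)}-\int u^{n}\,\d{(t\mu\boxplus\nu)}$.

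Next we apply (\ref{general_identity}) with $f(u)=u^{n}$, $a=t$ and $b=1$. This gives
\begin{align*}
\int u^{n}\,\d{(t\mu\ast\nu)}-\int u^{n}\,\d{(t\mu\boxplus\nu)}=t^{2}\,n(n-1)(n-2)(n-3)\,\widetilde{m}_{\mu,\nu}\bigl((tx+y)^{n-4}\bigr).
\end{align*}
The identity holds for $t\neq0$, and both sides are polynomials in $t$, so it extends to all $t$. Expanding $(tx+y)^{n-4}$ and comparing coefficients of $t^{k}$ yields
\begin{align*}
D_{\mu,\nu}(x^{k}y^{n-k})=\binom{n}{k}^{-1}n(n-1)(n-2)(n-3)\binom{n-4}{k-2}\,\widetilde{m}_{\mu,\nu}\bigl(x^{k-2}y^{n-k-2}\bigr).
\end{align*}
A direct simplification shows that the coefficient equals $k(k-1)(n-k)(n-k-1)$. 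Therefore
\begin{align*}
D_{\mu,\nu}(p)=\widetilde{m}_{\mu,\nu}\bigl(\partial_x^{2}\partial_y^{2}p\bigr)
\end{align*}
for every polynomial $p$. This is the central identity, and checking the binomial bookkeeping is the main computational step.

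For (\ref{cond2})$\Rightarrow$(\ref{cond1}): since $\widetilde{m}_{\mu,\nu}$ is a positive measure, $D_{\mu,\nu}(p)\geq0$ whenever $\partial_x^{2}\partial_y^{2}p\geq0$.

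For (\ref{cond1})$\Rightarrow$(\ref{cond2}), the hardest part, we use localization. Suppose $q:=\partial_x^{2}\partial_y^{2}p$ satisfies $q(x_0,y_0)<0$. Take $\mu$ to be the two-point measure $\tfrac12(\delta_{x_0-\varepsilon}+\delta_{x_0+\varepsilon})$, and $\nu$ similarly around $y_0$. By Theorem~\ref{thm2}, $\widetilde{m}_{\mu,\nu}$ is supported in $[x_0-\varepsilon,x_0+\varepsilon]\times[y_0-\varepsilon,y_0+\varepsilon]$ and has total mass $\tfrac1{12}\Var(\mu)\Var(\nu)=\varepsilon^{4}/12>0$. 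By continuity of $q$, for small $\varepsilon$ we get $D_{\mu,\nu}(p)=\widetilde{m}_{\mu,\nu}(q)<0$, which contradicts (\ref{cond1}). The total-mass claim itself follows from (\ref{general_identity}) with $f(u)=u^{4}/24$ and $a=b=1$, using the stated fourth-moment difference $2\Var(\mu)\Var(\nu)$.
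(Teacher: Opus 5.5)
Your proposal is correct and follows essentially the same route as the paper. Like the paper, you derive $D_{\mu,\nu}(p)=\widetilde{m}_{\mu,\nu}(\partial_x^{2}\partial_y^{2}p)$ by applying (\ref{general_identity}) to $u\mapsto u^{n}$ with scalings $(t,1)$, then use positivity of $\widetilde{m}_{\mu,\nu}$ for (b)$\Rightarrow$(a) and localization for (a)$\Rightarrow$(b). Your explicit $t^{k}$-coefficient bookkeeping (which does simplify to $k(k-1)(n-k)(n-k-1)$) and your two-point measures with total mass $\varepsilon^{4}/12$ just make precise what the paper states briefly via ``properties of symmetric lifts'' and ``by continuity.''
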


\begin{proof}
    We claim that
    \begin{align}\label{moment_difference}
        \mathbb{E}_{\substack{a \sim \mu, b \sim \nu \\ a, b \text{ are independent}}}\widetilde{p}(a, b) - \mathbb{E}_{\substack{a \sim \mu, b \sim \nu \\ a, b \text{ are freely} \\\text{
        independent}}}\widetilde{p}(a, b) = \widetilde{m}_{\mu, \nu}\left((x, y) \mapsto \frac{\d{}^{2}}{\d{x}^{2}}\frac{\d{}^{2}}{\d{y}^{2}}p(x, y)\right).
    \end{align}
    Indeed, by the properties of symmetric lifts, to show~(\ref{moment_difference}), it suffices to consider the polynomials $(x, y) \mapsto (t x + y)^{n}$. For these polynomials, the left-hand side  simplifies using the definition of $\widetilde{m}_{\mu, \nu}$ (\ref{general_identity}) to
    \begin{align*}
        t^{2} \widetilde{m}_{\mu, \nu}\left((x, y) \mapsto n (n - 1) (n - 2) (n - 3)(t x + y)^{n - 4}\right) = \widetilde{m}_{\mu, \nu}\left((x, y) \mapsto \frac{\d{}^{2}}{\d{x}^{2}}\frac{\d{}^{2}}{\d{y}^{2}}(t x + y)^{n}\right).
    \end{align*}

    Given (\ref{moment_difference}), positivity of $\widetilde{m}_{\mu, \nu}$ yields (\ref{cond2}) $\implies$ (\ref{cond1}).
    
    When $\mu$ and $\nu$ are supported near $x$ and $y$, $\widetilde{m}_{\mu, \nu}$ is supported near $(x, y)$. The right-hand side of~(\ref{moment_difference}) is then approximately $\d{}^2\d{}^2p(x,y)/\d{y^2}\d{x^2}$ by continuity, and thus, (\ref{cond1}) $\implies$ (\ref{cond2}).\end{proof}

\subsection{Commutation measuring measure}

The convolution comparison measure $\widetilde{m}_{\mu, \nu}$ admits an explicit expression for finitely supported $\mu$ and $\nu$. This expression is based on a peculiar linear algebra identity, inspired by the author's earlier work on tracial joint spectral measures \cite{MR4850603}.

For Hermitian $A, B \in M_{d}(\C)$, we consider the measure given by the non-negative function
\begin{align}\label{first_omega}
    (a, b) \mapsto \frac{1}{2 \pi}\sum_{i = 1}^{d} \left|\Im(\lambda_{i}((A - a I) (B - b I)))\right|.
\end{align}

We shall see that this measure vanishes if and only if $A$ and $B$ commute. In Lemma~\ref{weird_moment_lemma}, we provide a combinatorial expression for the moments of this measure. For instance,
\begin{align*}
    \int \frac{1}{2 \pi}\sum_{i = 1}^{d} \left|\Im(\lambda_{i}((A - a I) (B - b I)))\right| \d{a} \d{b} &= \frac{1}{3} \left(\tr(A^2 B^2) - \tr(A B A B)\right) \\
       &= \frac{1}{6}\tr((A B - B A)^{*} (A B - B A)).
\end{align*}
Starting with a finitely supported measure $\mu$, we can choose $A$ and $B$ appropriately such that the moments of~(\ref{first_omega}) can be expressed using the moments of the measure $\mu$; this will be the application in our setting. The positivity of (\ref{first_omega}) is the key ingredient in showing the positivity of the convolution comparison measure. We believe that further investigation of function (\ref{first_omega}) and the measure it defines is of independent interest.

\subsection{Overview of the proof and the paper}

As we mentioned, we will obtain an explicit expression for the convolution comparison measure $\widetilde{m}_{\mu, \nu}$; proving Theorem~\ref{thm2} using this expression amounts to proving an identity. Here are the main steps:

\begin{enumerate}
    \item In Proposition \ref{prop1}, we start with the right-hand side of the defining identity (\ref{general_identity}) with $f(t) = t^{n}$, and find a way to express it in terms of the moments of $\mu$ and $\nu$. This leads to an expression for $\widetilde{m}_{\mu, \nu}((t_{\mu}, t_{\nu}) \mapsto t_{\mu}^{n_{\mu}} t_{\nu}^{n_{\nu}})$ for $n_{\nu}, n_{\mu} \in \mathbb{N}$ in (\ref{basic_ccm_series}) in terms of certain linear functionals $I_{l}^{\cdot}$.
    \item In the case of finitely supported $\mu$ and $\nu$, we find a way to express the functionals $I_{l}^{\cdot}$ using Gegenbauer polynomials and the density (\ref{first_omega}) for appropriately chosen $A$ and $B$.
    \item In Proposition~\ref{prop3}, we evaluate a series involving Gegenbauer polynomials to obtain an evidently positive expression for the convolution comparison measure.
    \item A little approximation tells us that we can go beyond polynomial $f$ and finitely supported $\mu$ and $\nu$. This proves Theorem~\ref{thm2}. Theorem~\ref{thm1} follows easily.
\end{enumerate}

In section~\ref{prelim}, we give a detailed overview of the tools we need, and prove some of the identities we rely upon. In section~\ref{necessity}, we give a short but somewhat sketchy argument for the necessity of the fourth derivative condition, and argue why the condition is sufficient (at least to the leading order). Finally, in section~\ref{main_section} we prove Theorems~\ref{thm1} and \ref{thm2}. Section~\ref{discussion} is reserved for further comments and questions.

\section{Preliminaries and some identities}\label{prelim}

\subsection{Divided differences}

The divided differences of order $k$ are defined recursively as follows: let
\begin{align*}
	[x_{0}]_{f} = f(x_{0}),
\end{align*}
and for any positive integer $k$, recursively define
\begin{align}\label{lol}
	[x_{0}, x_{1}, \ldots, x_{k}]_{f} = \frac{[x_{0}, x_{1}, \ldots, x_{k - 1}]_{f} - [x_{1}, x_{2}, \ldots, x_{k}]_{f}}{x_{0} - x_{k}},
\end{align}
for pairwise distinct points $x_{0}, x_{1}, \ldots, x_{k}$. A short calculation yields an explicit expression
\begin{align}\label{lol2}
    [x_{0}, x_{1}, \ldots, x_{k}]_{f} = \sum_{i = 0}^{k} \frac{f(x_{i})}{\prod_{j \neq i} (x_{i} - x_{j})}.
\end{align}

If $f$ is $C^{k}$, the divided differences of order $k$ has a continuous extension to all tuples of $k + 1$~points. This extension satisfies (\ref{lol}) whenever $x_{0} \neq x_{k}$; and a mean value theorem
\begin{align}\label{dd_mean_value_theorem}
	[x_{0}, x_{1}, \ldots, x_{k}]_{f} = \frac{f^{(k)}(x)}{k!},
\end{align}
for some $x \in \conv \{x_{0}, x_{1}, \ldots, x_{k}\}$.

We shall employ the following expression for the divided differences of power functions
\begin{align}\label{dd_powers}
    [x_{0}, x_{1}, x_{2}, \ldots, x_{k}]_{t \mapsto t^{n}} = \sum_{\substack{i_{0}, i_{1}, \ldots, i_{m} \geq 0 \\ i_{0} + i_{1} + \ldots + i_{k} = n - k}} x_{0}^{i_{0}} x_{1}^{i_{1}} \cdots x_{k}^{i_{k}}.
\end{align}

For this, and a lot more, see the survey of de Boor \cite{MR2221566}.

\subsection{Hermitian matrix identities}

We will need some identities from the theory of tracial joint spectral measures \cite{MR4850603}.

We will denote the eigenvalues of $A \in M_{d}(\C)$ by $\lambda(A) = \{\lambda_{1}(A), \lambda_{2}(A), \ldots, \lambda_{d}(A)\}$, repeated with the algebraic multiplicities; the ordering will play no role for us.

\begin{thm}\label{tjsm}
    Let $d$ be a positive integer and $A, B \in M_{d}(\C)$ be Hermitian. Then, there exists a unique positive measure $\mu_{A, B}$ on $\R^{2}$ such that the following is true:

    Fix any measurable function $f$ on $\R$ such that $f(0) = 0$, and for any $M > 0$,
    \begin{align*}
        \int_{-M}^{M} \left|\frac{f(t)}{t}\right| \d{t} < \infty.
    \end{align*}
    Define a function $H(f) : \R \to \R$ by
    \begin{align*}
        H(f)(x) &= \int_{0}^{1} \frac{1 - t}{t} f(x t) \d{t}.
    \end{align*}
    Then, for any $x, y \in \R$, we have
    \begin{align}\label{mainIdentity}
        \tr H(f)(x A + y B) = \int_{\R^{2}} f(a x + b y) \d{\mu}_{A, B}(a, b).
    \end{align}
    Denote by $\mu_{c} = \mu_{c, A, B}$ and $\mu_{s} = \mu_{s, A, B}$ the continuous and singular parts of $\mu_{A, B}$ w.r.t. the Lebesgue measure $m_{2}$ on $\R^{2}$. We assume some linear combination of $A$ and $B$ is invertible. Then, the continuous part $\mu_{c}$ is given by
    \begin{align}\label{density_formula}
        \rho_{A, B}(a, b) := \frac{\d{\mu}_{c}}{\d{m}_{2}}(a, b) = \frac{1}{2 \pi}\sum_{i = 1}^{d} \left|\Im\left( \lambda_{i}\left(\left(I - \frac{a A + b B}{a^{2} + b^{2}}\right) (b A - a B)^{-1} \right)\right) \right|.
    \end{align}
    Furthermore, if $A$ is invertible and $A^{-1} B$ has $d$ distinct eigenvalues, the singular part $\mu_{s}$ satisfies
    \begin{align}\label{singular_formula}
        \mu_{s}(\varphi) = \sum_{v \in E(A^{-1} B)} \int_{0}^{1} \frac{1 - t}{t} \varphi\left( \frac{\langle A v, v \rangle}{\langle v, v\rangle} t, \frac{\langle B v, v\rangle}{\langle v, v\rangle} t\right) \d{t}.
    \end{align}
    where $E(C)$ denotes a set of eigenvectors of a matrix $C\in M_{d}(\C)$ and $\varphi$ is a smooth function with compact support that does not contain $0$.
\end{thm}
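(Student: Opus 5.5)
This theorem is quoted from the author's earlier work on tracial joint spectral measures \cite{MR4850603}, so the plan is to reconstruct its proof in outline rather than re-derive every identity.

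\textbf{Existence and uniqueness.} Fix a direction $(x,y)$ with $x^2+y^2=1$. The left side of (\ref{mainIdentity}) is
\begin{align*}
\tr H(f)(xA+yB)=\sum_i \int_0^1 \frac{1-t}{t} f(t\lambda_i(xA+yB))\,\d{t}.
\end{align*}
This is the integral of $f$ against a one-dimensional measure $\nu_{x,y}$ on $\R$, with explicit density $(1-s/\lambda)/s$ on each segment $[0,\lambda]$, $\lambda=\lambda_i(xA+yB)$. The theorem asserts that the family $\{\nu_{x,y}\}$ consists of the pushforwards of a single positive measure $\mu_{A,B}$ under the projections $(a,b)\mapsto ax+by$.

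Uniqueness follows from the Cramér--Wold/Radon inversion principle: a measure on $\R^2$ is determined by its one-dimensional projections. Test functions such as $f(t)=t^n$ or $f(t)=e^{it}-1$ reduce this to uniqueness for moments or Fourier transforms, and the singularity at $0$ is harmless because $f(0)=0$.

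Existence requires positivity. I would construct $\mu_{A,B}$ explicitly and check it, rather than invert the Radon transform abstractly. For $f(t)=t^n$ with $n\ge 1$, $H(f)(x)=x^n/(n(n+1))$. So it suffices to find $\mu$ with
\begin{align*}
\int (ax+by)^n \,\d{\mu}=\frac{\tr (xA+yB)^n}{n(n+1)}.
\end{align*}
Equivalently, $\int a^k b^{n-k}\,\d{\mu}$ should equal the symmetrized trace $\tr m_{k,n-k}(A,B)/(n(n+1))$.

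\textbf{Formulas (\ref{density_formula}) and (\ref{singular_formula}).} I would compute the Fourier transform $\hat\mu(x,y)$. Take $f(t)=e^{-i t}-1+\ldots$, or more cleanly the resolvent test functions $f(t)=\log(1-t/z)$ and $f(t)=t/(z-t)$. Then $\tr H(f)(xA+yB)$ becomes an explicit function of the spectrum of $xA+yB$, namely a combination of $\tr\log(1-(xA+yB)/z)$ and related terms.

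Applying the inverse Radon transform (the filtered back-projection formula in polar coordinates $(x,y)=(\cos\theta,\sin\theta)$), the density at $(a,b)$ becomes an integral over $\theta$. The inner object is the matrix pencil $(I-(aA+bB)/(a^2+b^2))$ relative to the perpendicular direction $bA-aB$.

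The key computation is that the angular integral of the resolvent, $\frac{1}{2\pi}\int \tr\bigl((a\cos\theta+b\sin\theta)-(\cos\theta A+\sin\theta B)\bigr)^{-1}\cdots\d\theta$, localizes by residues. It picks up contributions from the eigenvalues of $(I-\frac{aA+bB}{a^2+b^2})(bA-aB)^{-1}$ in the upper and lower half planes. Real eigenvalues contribute nothing to the absolutely continuous part. Non-real eigenvalues come in conjugate pairs (the matrix is a product of Hermitian matrices, so its spectrum is conjugation-symmetric), which yields $\frac1{2\pi}\sum|\Im\lambda_i|$.

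Positivity of the continuous part is then manifest. The singular part arises from directions where the pencil degenerates. When $A^{-1}B$ has simple spectrum, its eigenvectors $v$ give rays $t\mapsto t(\langle Av,v\rangle,\langle Bv,v\rangle)/\langle v,v\rangle$. I would verify (\ref{singular_formula}) by checking the radial profile $\frac{1-t}{t}$ against $H$ along such rays, since $H$ itself is the weight $\frac{1-t}{t}$ on $[0,1]$.

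\textbf{Main obstacle.} The main obstacle is the residue/back-projection computation: justifying the interchange of the $\theta$-integral with the distributional inverse Radon transform, which involves a Hilbert transform in the radial variable. The step I would attempt first is the generic case where all eigenvalues of the relevant pencils are simple, followed by density and continuity arguments.

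Positivity of the total measure, singular part included, then follows because both pieces are visibly non-negative. This confirms the existence of a positive $\mu_{A,B}$.
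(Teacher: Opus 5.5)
The paper gives no proof of this theorem. It is quoted from \cite{MR4850603}, so your outline has to stand on its own.

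\textbf{Uniqueness.} This half essentially holds, with one correction. An atom at the origin is invisible to test functions with $f(0)=0$, so uniqueness holds on $\R^2\setminus\{0\}$. To make the moment argument rigorous, first get compact support by testing $f=1_{\{|t|>R\}}$, then apply moment determinacy to the finite measure $(a^2+b^2)\,\d{\mu}$.

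\textbf{Existence: the density formula is asserted, not derived.} This half has a genuine gap. Some distribution with the prescribed projections exists for free: invert the Fourier transform of the tempered function $\xi\mapsto\tr h(\xi_1A+\xi_2B)$, where $h(s)=\int_0^1\frac{1-t}{t}(e^{-ist}-1)\,\d{t}$. The whole content of the theorem is that this distribution is a positive measure. You obtain positivity only from (\ref{density_formula}) and (\ref{singular_formula}), which you assert rather than derive.

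The mechanism you sketch would not produce them. Integrating a resolvent along a real line by residues gives $\pm\pi i$ per pole, i.e.\ half-plane counts, not $|\Im\lambda_i|$. The computation that does work, for $(a,b)$ with $bA-aB$ invertible, goes as follows.
\begin{itemize}
\item Set $c=a\cos\theta+b\sin\theta$ and $C=\cos\theta A+\sin\theta B$. Polar Fourier inversion and the identity $\int_0^1\frac{1-t}{t}\bigl(c^{-2}-(c-\lambda t)^{-2}\bigr)\d{t}=c^{-2}\log(1-\lambda/c)$ give formally $\rho(a,b)=\frac{1}{4\pi^2}\int_0^{2\pi}(c+i0)^{-2}\log\det\bigl(I-C/(c+i0)\bigr)\,\d{\theta}$.
\item Pair $\theta$ with $\theta+\pi$ and substitute $\tau=\tan(\theta-\theta_0)$, where $(\cos\theta_0,\sin\theta_0)$ is parallel to $(a,b)$. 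This turns the integral into $\frac{1}{2\pi^2(a^2+b^2)}\int_{\R}\log\bigl|\det Q\prod_i(\tau+\tau_i)\bigr|\,\d{\tau}$, with $Q=\frac{bA-aB}{a^2+b^2}$ and $\tau_i=(a^2+b^2)\lambda_i\bigl((I-\frac{aA+bB}{a^2+b^2})(bA-aB)^{-1}\bigr)$.
\item The absolute imaginary parts enter through $\int_{\R}\log\bigl|\frac{\tau+\tau_i}{\tau+i}\bigr|\,\d{\tau}=\pi(|\Im\tau_i|-1)$, taken as a symmetric improper integral.
\item The divergent remainder $\int_{\R}(\log|\det Q|+d\log|\tau+i|)\,\d{\tau}$ must be assigned the value $\pi d$. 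Then the constants cancel and (\ref{density_formula}) comes out.
\end{itemize}
That value is dictated by the $+i0$ prescription at the two directions with $c=0$. This is exactly where the projected measures have their non-integrable $1/|s|$ singularity. So the step you defer as the ``main obstacle'' is not a technicality; it is where the formula comes from.

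\textbf{Existence: the decomposition is not shown to be exhaustive.} Even granting the density, positivity requires the inverted distribution to be \emph{exactly} $\rho\,\d{m_2}+\mu_s$, and your proposal does not show this. On the lines $b=\lambda a$, with $\lambda$ a real eigenvalue of $A^{-1}B$, the computation above breaks down:
\begin{itemize}
\item $\det Q=0$;
\item $C$ has a zero eigenvalue precisely in the directions where $c=0$;
\item $\Im\log\det(c+i0-C)$ jumps there, so its pairing with $\Im(c+i0)^{-2}=\pi\delta'(c)$ no longer drops out.
\end{itemize}
The singular part must be extracted from this local analysis. You must show that it lives only on the segment from $0$ to $(\langle Av,v\rangle,\langle Bv,v\rangle)/\langle v,v\rangle$. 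That endpoint is where $\langle(I-\frac{aA+bB}{a^2+b^2})v,v\rangle$ changes sign along the line. You must also show the weight is $\frac{1-t}{t}\,\d{t}$ and that there is no negative mass on the rest of the line. ``Checking the radial profile against $H$'' presupposes the location, endpoint and sign of this mass instead of deriving them.

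Two things remain missing: this local analysis, and passing from the generic case to arbitrary $(A,B)$ with some uniform control near the origin. Without them, your argument proves uniqueness but not the existence of a positive $\mu_{A,B}$.
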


\begin{lem}\label{derivative_formula_lemma_2}
    Let $A, B \in M_{d}(\C)$ be Hermitian, with $A$ invertible, and $k \geq 2$. Then, the function $t \mapsto \tr (A + t B)_{+}^{k - 1}$ is smooth at $t = 0$, and
    \begin{align}\label{derivative_formula_2}
        \frac{\d{^{k}}}{\d {s^{k}}}\tr (A + s B)_{+}^{k - 1}\Big|_{s = 0} = (k - 1) k! \int_{\R} b^{k} \rho_{A, B}(0, b) \d{b},
    \end{align}
    where $\rho_{A, B}$ is as in (\ref{density_formula}).
\end{lem}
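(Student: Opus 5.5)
We derive the identity from Theorem~\ref{tjsm} applied with $f(t) = t_{+}^{k}$. This $f$ vanishes at $0$ and $f(t)/t$ is locally integrable, so the theorem applies. A direct computation gives
\begin{align*}
    H(t_{+}^{k})(x) = \int_{0}^{1} \frac{1 - t}{t} (x t)_{+}^{k} \d{t} = x_{+}^{k}\int_{0}^{1} (1 - t) t^{k - 1} \d{t} = \frac{x_{+}^{k}}{k (k + 1)}.
\end{align*}
Identity~(\ref{mainIdentity}) with $x = 1$, $y = s$ then reads
\begin{align*}
    \frac{1}{k(k+1)}\tr (A + s B)_{+}^{k} = \int_{\R^{2}} (a + s b)_{+}^{k} \d{\mu}_{A, B}(a, b).
\end{align*}
The exponent here is $k$, not $k - 1$. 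To fix this, we run the argument with $k$ replaced by $k - 1$: we take $f(t) = t_{+}^{k-1}$ with $k \ge 2$, so that $H(f)(x) = x_{+}^{k-1}/((k-1)k)$. This gives
\begin{align*}
    \tr (A + s B)_{+}^{k-1} = (k-1)k \int_{\R^{2}} (a + s b)_{+}^{k-1} \d{\mu}_{A,B}(a,b).
\end{align*}
The prefactor $(k-1)k!$ in (\ref{derivative_formula_2}) is $(k-1)k \cdot (k-1)!$, consistent with this normalization.

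Smoothness of the left side near $s = 0$ is immediate. Since $A$ is invertible, for small $|s|$ no eigenvalue of $A + sB$ crosses $0$. Hence $\tr(A+sB)_+^{k-1} = \tr\, p(A+sB)$, where $p(t) = t^{k-1}$ on a neighbourhood of the positive spectrum and $p = 0$ near the negative spectrum. Equivalently, it is the trace of $P(s)(A+sB)^{k-1}$ with $P(s)$ a Riesz projection, which is analytic in $s$.

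The main step is to differentiate the right side $k$ times in $s$ at $s = 0$. Formally, $\frac{\d^{k}}{\d s^{k}}(a+sb)_+^{k-1} = (k-1)!\, b^{k}\, \delta(a + s b)$. Pairing this with $\mu_{A,B}$ at $s = 0$ gives $(k-1)! \int b^{k}\, \d\mu_{A,B}$ restricted to the line $a = 0$, which for the continuous part equals $(k-1)!\int b^{k}\rho_{A,B}(0,b)\d b$. Combined with the factor $(k-1)k$, this yields exactly (\ref{derivative_formula_2}).

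To make this rigorous, we change variables on the continuous part. We integrate over $a$ first by writing $u = a + s b$, so that
\begin{align*}
    \int (a+sb)_+^{k-1}\rho(a,b)\d a \d b = \iint u_+^{k-1}\rho(u - sb, b)\d u\d b.
\end{align*}
Then $k$ derivatives in $s$ give $(-1)^k b^k\, \partial_a^{k}\rho(u - sb, b)$. Integrating by parts $k-1$ times in $u$ against $u_+^{k-1}$ leaves a boundary evaluation at $u=0$, which produces $(k-1)!\, b^k \rho(-sb, b)$. This computation needs $\rho_{A,B}$ to be smooth near the line $a=0$. That follows from (\ref{density_formula}): at $a = 0$ the matrix becomes $(I - bB/b^{2})(bA)^{-1} = (bA)^{-1} - b^{-1}B A^{-1}b^{-1}$, which is well defined for $b\neq 0$ since $A$ is invertible. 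Moreover, the imaginary parts of its eigenvalues vary piecewise analytically. Nonsmooth points (eigenvalue collisions on the real axis) would be handled by a genericity and continuity argument in $B$.

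We also need the singular part $\mu_s$ to contribute nothing. By (\ref{singular_formula}), $\mu_s$ lives on rays $t\mapsto t(\langle Av,v\rangle, \langle Bv,v\rangle)/\langle v,v\rangle$. Since $A^{-1}B v = \lambda v$ and $A$ is invertible, each such ray has nonzero $a$-direction: indeed $\langle Av, v\rangle = \lambda^{-1}\langle Bv,v\rangle$ is real, and if it vanished then so would $\langle Bv,v\rangle$. Along each ray $(a+sb)_+^{k-1} = t^{k-1}(\alpha + s\beta)_+^{k-1}$ with $\alpha \neq 0$, which is a polynomial in $s$ of degree $k-1$ for small $s$. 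Its $k$-th derivative therefore vanishes. This treats the generic case where $A^{-1}B$ has distinct eigenvalues; the general case follows by approximation, since both sides are continuous in $B$.

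The hard part is justifying the interchange of derivatives and integral, and the boundary terms at infinity. These are harmless because $\mu_{A,B}$ is compactly supported, as is evident from (\ref{mainIdentity}) with bounded spectra.
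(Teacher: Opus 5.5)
Your overall architecture matches the paper's. You apply Theorem~\ref{tjsm} with $f(t)=t_+^{k-1}$; the initial detour through $t_+^k$ is harmless. You discard the singular part because $\langle Bv,v\rangle=\lambda\langle Av,v\rangle$ keeps its rays off the line $a=0$, and you reduce the continuous part to $\rho_{A,B}(0,\cdot)$. The formal $\delta$-computation gives the right constant, and the Riesz-projection argument for smoothness of the left-hand side is fine.

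The gap is in the step meant to make the continuous part rigorous. After substituting $u=a+sb$, you put all $k$ derivatives on $\rho_{A,B}$ and integrate by parts over the whole half-line $u>0$. That requires $\rho_{A,B}(\cdot,b)$ to be $C^k$ on all of $\{a>-sb\}$, not merely near $a=0$. This is false, because $\rho_{A,B}$ is only continuous. Where a conjugate pair of eigenvalues of the matrix in (\ref{density_formula}) is born from a real double eigenvalue, $|\Im\lambda|$ generically behaves like the square root of the distance to that collision locus, so $\rho_{A,B}$ is not even $C^1$ there. This happens, for instance, along the boundary of the support of $\rho_{A,B}$. Such curves persist under small perturbations of $B$, so no genericity argument in $B$ removes them.

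Your closing remark is also too quick. Compact support does not by itself let you move $\frac{d^k}{ds^k}$ inside the integral, because $\mu_{A,B}$ in general has infinite mass near the origin. The weight $\frac{1-t}{t}$ in (\ref{singular_formula}) already shows this for $\mu_s$, and $\rho_{A,B}(0,b)\sim c\,b^{-2}$ as $b\to 0$ whenever $BA^{-1}$ has non-real eigenvalues. What saves you is that the $s$-derivatives of $(a+sb)_+^{k-1}$ of order at most $k-1$ are homogeneous of degree $k-1\ge 1$ in $(a,b)$, so they compensate the singularity at the origin.

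The fix is to keep the derivatives on the kernel and to localize, as the paper does.
\begin{itemize}
\item For small $\delta>0$, split $\R^2$ into the wedge $S_\delta=\{|a|\le\delta|b|\}$ and its complement. Off $S_\delta$ and for $|s|\le\delta$, the function $(a+sb)_+^{k-1}$ is a polynomial of degree at most $k-1$ in $s$. This kills the contribution of $\mu_c$ there, and all of $\mu_s$, which gives $S_\delta$ no mass.
\item On $S_\delta$, write $a=ub$. The continuous contribution becomes $\int |b|^{k-2}\int_{-\delta}^{\delta}(\sign(b)(u+s))_+^{k-1}\,h_b(u)\,du\,db$, where $h_b(u)=b^2\rho_{A,B}(ub,b)$.
\item From $b^2\rho_{A,B}(a,b)=\frac{1}{2\pi}\sum_i|\Im\lambda_i((bI-B)(A-\frac{a}{b}B)^{-1})|$, with $A-\frac{a}{b}B$ invertible on $S_\delta$, the functions $h_b$ are uniformly bounded and continuous in $u$ for $b\neq 0$.
\item Differentiate the inner integral $k$ times: the first $k-1$ times by Leibniz' rule (the boundary terms vanish since $k\ge 2$), the last by the fundamental theorem of calculus. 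This gives $\sign(b)^k(k-1)!\,h_b(-s)$.
\item Dominated convergence in $b$ then yields $(k-1)!\int b^k\rho_{A,B}(0,b)\,db$.
\end{itemize}
This route uses only continuity of $\rho_{A,B}$ near the line $a=0$ and boundedness of $b^2\rho_{A,B}$ on the wedge, and never any differentiability of $\rho_{A,B}$.
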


\begin{proof}
    We apply Theorem \ref{tjsm} with $f: t \mapsto t_{+}^{k - 1}$. We can check that $H(f) = f/(k (k - 1))$, and (\ref{mainIdentity}) produces
    \begin{align*}
        \tr (A + s B)_{+}^{k - 1} = k (k - 1) \int_{\R^{2}} (a + b s)_{+}^{k - 1} \d{\mu}_{A, B}(a, b).
    \end{align*}
    For a fixed $\delta > 0$, we can partition $\R^{2}$ into a wedge $S_{\delta} = \{(a, b) \in \R^{2} \mid |a| \leq \delta |b| \}$ and its complement. Observe that
    \begin{align*}
        \int_{\R^{2} \setminus S_{\delta}} (a + b s)_{+}^{k - 1} \d{\mu}_{A, B}(a, b)
    \end{align*}
    is a polynomial in $s$ of degree at most $k - 1$ for $|s| \leq |\delta|$, and hence its $k$th derivative vanishes.

    By~(\ref{singular_formula}), $\mu_{s}(S_{\delta}) = 0$ whenever $S_{\delta}$ doesn't contain any non-zero points of the form $(\langle A v, v\rangle/\langle v, v\rangle, \langle B v, v\rangle/\langle v, v\rangle)$, for an eigenvector $v$ of $A^{-1} B$ (with eigenvalue $\lambda$). Note that as $\lambda \langle A v, v\rangle = \langle B v, v\rangle$, non-zero points in $S_{\delta}$ correspond to  $1 \leq |\lambda| \delta$. By picking $\delta$ to be small enough, we can ensure that $\mu_{s}(S_{\delta}) = 0$.
    
    By~(\ref{density_formula}) \cite[Remark~3.2]{MR4850603}, the continuous part can be rewritten as
    \begin{align*}
        \rho_{A, B}(a, b) = \frac{1}{2 \pi b^2}\sum_{i = 1}^{d} \left|\Im\left( \lambda_{i}\left(\left(b I - B\right) \left(A - \frac{a}{b}B\right)^{-1} \right)\right) \right|.
    \end{align*}
    
    Using standard eigenvalue regularity results \cite{MR1477662}, we see that $b^2 \rho_{A, B}(a, b)$ is bounded on $S_{\delta}$ and continuous on $S_{\delta} \setminus \{(0, 0)\}$. Hence, the left-hand side of~(\ref{derivative_formula_2}) depends only on the function $\rho(0, b)$. For the exact dependence, by approximation, it suffices to calculate the $k$th derivative of
    \begin{align*}
        k (k - 1) \int_{b_{1}}^{b_{2}} \int_{-|b|}^{|b|} (a + b s)_{+}^{k - 1} b^{l - 2} \d{a} \d{b}
    \end{align*}
    at $s = 0$, and confirm that it agrees with
    \begin{align*}
        (k - 1) k! \int_{b_{1}}^{b_{2}} b^{k + l - 2} \d{b}.
    \end{align*}

    This amounts to computing
    \begin{align*}
        k (k - 1)\int_{b_{1}}^{b_{2}} \int_{-|b|}^{|b|} (a + b s)_{+}^{k - 1} b^{l - 2} \d{a} \d{b} &= (k - 1)\int_{b_{1}}^{b_{2}} |b|^{k} \left(s \sign(b) + 1\right)^{k} b^{l - 2} \d{b}\\ &= s^{k} (k - 1)\int_{b_{1}}^{b_{2}}  b^{k + l - 2} \d{b} + p(s),
    \end{align*}
    where $p$ is a polynomial in $s$ of degree less than $k$. This last expression evidently has the right $k$th derivative.
\end{proof}

\begin{lem}\label{weird_moment_lemma}
    Let $A, B \in M_{d}(\C)$ be Hermitian. Define a function $\omega_{A, B}$ as
    \begin{align*}
        \omega_{A, B}(a, b) := \frac{1}{2 \pi}\sum_{i = 1}^{d} \left|\Im(\lambda_{i}((A - a I) (B - b I)))\right|.
    \end{align*}

    Then, $\omega_{A, B}$ is continuous and compactly supported on $\conv(\lambda(A)) \times \conv(\lambda(B))$, and for any $k, l \in \mathbb{N}$, we have
    \begin{align}\label{moment_expression}
        \int a^{k} b^{l} \omega_{A, B}(a, b) \d{a} \d{b} = \frac{1}{(k + l + 2) (k + l + 3)} \sum_{n = 1}^{\min(k, l) + 2} \frac{(-1)^{n - 1}}{\binom{k + l + 1}{n - 1}} \sum_{\substack{i_{1}, \ldots, i_{n}, j_{1}, \ldots, j_{n} \geq 1 \\ i_{1} + \ldots, i_{n} = k + 2 \\ j_{1} + \ldots, j_{n} = l + 2 \\}} \tr(A^{i_{1}} B^{j_{1}} \cdots A^{i_{n}} B^{j_{n}}).
    \end{align}
\end{lem}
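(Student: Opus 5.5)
The plan is to reduce $\omega_{A,B}$ to the density $\rho$ of Theorem~\ref{tjsm}, so that Lemma~\ref{derivative_formula_lemma_2} computes its moments in $b$ for fixed $a$, and then to integrate in $a$.

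\emph{Continuity and support.} The eigenvalues of a matrix depend continuously on its entries, and the sum of $|\Im|$ over all eigenvalues (with multiplicity) is a symmetric function of them. Hence $\omega_{A,B}$ is continuous. If $a\notin\conv(\lambda(A))$, then $A-aI$ is definite, say $\pm(A-aI)=C^{2}$ with $C$ Hermitian positive definite. In that case $(A-aI)(B-bI)$ is similar to $\pm C(B-bI)C$, which is Hermitian, so all its eigenvalues are real and $\omega_{A,B}(a,b)=0$. The same argument in $b$, using $(A-aI)(B-bI)\sim(B-bI)(A-aI)$, gives the support claim.

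\emph{Moments in $b$ for fixed $a$.} Fix $a\notin\lambda(A)$ and put $X_{a}=(A-aI)^{-1}$, which is Hermitian and invertible. Using the rewritten form of $\rho$ from the proof of Lemma~\ref{derivative_formula_lemma_2},
\begin{align*}
\rho_{X_{a},B}(0,b)=\frac{1}{2\pi b^{2}}\sum_{i}\bigl|\Im\lambda_{i}\bigl((bI-B)(A-aI)\bigr)\bigr|=\frac{\omega_{A,B}(a,b)}{b^{2}},
\end{align*}
where the last equality holds because $(bI-B)(A-aI)$ and $-(A-aI)(B-bI)$ have the same spectrum. Lemma~\ref{derivative_formula_lemma_2} with $k=l+2$ then gives
\begin{align*}
\int b^{l}\,\omega_{A,B}(a,b)\,\d{b}=\frac{1}{(l+1)(l+2)!}\,\frac{\d{^{l+2}}}{\d{s^{l+2}}}\tr\bigl(X_{a}+sB\bigr)_{+}^{l+1}\Big|_{s=0}.
\end{align*}
Next I expand this derivative. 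I write $\tr(X_{a}+sB)_{+}^{l+1}$ as a contour integral of $z_{+}^{l+1}\tr(z-X_{a}-sB)^{-1}$ around the eigenvalues of $X_{a}$ and expand the resolvent in powers of $s$. The $s^{l+2}$ coefficient is then a sum over the eigenbasis of $X_{a}$ of products of matrix entries of $B$ times divided differences of $t\mapsto t_{+}^{l+1}$ at $l+3$ eigenvalues of $X_{a}$. The polynomial part $t^{l+1}$ has vanishing divided differences of order $l+2$, so only the jump of $t_{+}$ at $0$ contributes. Explicitly, the contribution is the divided difference of $t^{l+1}\mathbf 1_{t>0}$, which by (\ref{lol2}) and (\ref{dd_powers}) is computable in terms of the eigenvalues $1/(\alpha_{i}-a)$, where the $\alpha_{i}$ are the eigenvalues of $A$.

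\emph{Integration in $a$ and reassembly.} I multiply by $a^{k}$ and integrate in $a$ over $\conv(\lambda(A))$, which is legitimate since $\omega_{A,B}$ is compactly supported. For each index tuple, the expression is piecewise rational in $a$, with breakpoints at the $\alpha_{i}$; the indicator $\mathbf 1_{t>0}$ selects which $\alpha_{i}$ lie above $a$. Each piece integrates to a combination of divided differences of $t^{k+\cdots}$ at the $\alpha_{i}$. By (\ref{dd_powers}), such a combination is a sum of monomials in the $\alpha_{i}$. Combined with the entries of $B$ along a closed cycle of indices, these monomials reassemble into traces $\tr(A^{i_{1}}B^{j_{1}}\cdots A^{i_{n}}B^{j_{n}})$, where $n$ is the number of $B$-factors in the resolvent expansion.

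The main obstacle I expect is this last bookkeeping: showing that the beta-type integrals that appear (integrals of $(\alpha-a)^{p}$-type pieces against $a^{k}$) produce exactly the weights $(-1)^{n-1}\big/\bigl((k+l+2)(k+l+3)\binom{k+l+1}{n-1}\bigr)$. If the direct route becomes unwieldy, the fallback is a generating-function check. Both sides of (\ref{moment_expression}) are homogeneous of the same bidegree, and both are invariant under unitary conjugation and under simultaneous shifts, so I would test the identity against $\int e^{xa+yb}\omega_{A,B}\,\d{a}\,\d{b}$. For that it suffices to verify the case where $A$ is diagonal with distinct eigenvalues, where every step above reduces to explicit one-variable integrals; the general case then follows by continuity of both sides in $(A,B)$.
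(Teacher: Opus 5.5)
Your reduction is correct and is the paper's. For $a\notin\lambda(A)$ you pass to $X_a=(A-aI)^{-1}$, check $\rho_{X_a,B}(0,b)=\omega_{A,B}(a,b)/b^{2}$, and apply Lemma~\ref{derivative_formula_lemma_2} with $k=l+2$. Your similarity argument for the support is also correct: writing $\pm(A-aI)=C^{2}$, the matrix $(A-aI)(B-bI)$ has the spectrum of the Hermitian matrix $\pm C(B-bI)C$. This is a self-contained replacement for the citation the paper uses.

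The gap is that everything after the reduction, which is the actual content of the lemma, is not carried out. The weights $(-1)^{n-1}/\bigl((k+l+2)(k+l+3)\binom{k+l+1}{n-1}\bigr)$ are named as the target but never derived. Your fallback does not avoid the computation. Neither side of (\ref{moment_expression}) is shift invariant, and showing both transform alike under $A\mapsto A+cI$ is itself a nontrivial identity. The ``$A$ diagonal with distinct eigenvalues'' case is just the direct route again. Your structural description is also wrong. The $s^{l+2}$ coefficient always has exactly $l+2$ factors of $B$, so $n$ is not ``the number of $B$-factors''. In (\ref{moment_expression}), $n$ is the number of cyclic blocks $B^{j_r}$, i.e.\ the number of \emph{nonzero} powers of $A$ inserted between consecutive $B$'s. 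Explaining why the weight depends only on this number is the crux.

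What is missing, following the paper, is four steps.
\begin{enumerate}[(i)]
\item Use the symmetrized form $\frac{1}{l+2}\sum_{i_1,\ldots,i_{l+2}}[(\alpha_{i_1}-a)^{-1},\ldots,(\alpha_{i_{l+2}}-a)^{-1}]_{t\mapsto t_+^{l}}\,B_{i_1i_2}\cdots B_{i_{l+2}i_1}$. This uses divided differences of $f'$ at $l+2$ nodes, rather than of $f$ at $l+3$ nodes with a repeat. Via $x^{-1}-y^{-1}=(y-x)/(xy)$, each such divided difference equals $\sum_{j:\alpha_{i_j}>a}\prod_{j'}(\alpha_{i_{j'}}-a)\big/\prod_{j'\neq j}(\alpha_{i_{j'}}-\alpha_{i_j})$, a piecewise \emph{polynomial} in $a$.
\item Integrating against $a^{k}$ gives $(-1)^{l+1}[\alpha_{i_1},\ldots,\alpha_{i_{l+2}}]_{F_k}$, where $F_k$ is an antiderivative of $a^{k}\prod_{j'}(\alpha_{i_{j'}}-a)$.
\item Expand $F_k$ with elementary symmetric polynomials $e_j$ and (\ref{dd_powers}) to get $\sum_{j}(-1)^{j-1}e_j h_{k+2-j}/(k+l+3-j)$, with $h_m$ the complete homogeneous symmetric polynomial. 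So the coefficient of $\alpha_{i_1}^{p_1}\cdots\alpha_{i_{l+2}}^{p_{l+2}}$ is $\sum_j(-1)^{j-1}\binom{n}{j}/(k+l+3-j)$, where $n=\#\{r: p_r>0\}$. Chu--Vandermonde evaluates this to $(-1)^{n-1}/\bigl((k+l+3)\binom{k+l+2}{n}\bigr)$.
\item Regroup into traces. Cyclic rotation turns $\sum\tr(B^{j_0}A^{i_1}B^{j_1}\cdots A^{i_n}B^{j_n})$ into $\frac{l+2}{n}\sum\tr(A^{i_1}B^{j_1}\cdots A^{i_n}B^{j_n})$, cancelling the $\frac{1}{l+2}$. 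Then $n\binom{k+l+2}{n}=(k+l+2)\binom{k+l+1}{n-1}$ yields the stated constant.
\end{enumerate}
Without (i)--(iii) in particular, your proposal gives no reason for the $n$-dependence of the weights.
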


\begin{proof}
    Continuity of $\omega_{A,B}$ follows from continuity of eigenvalues \cite{MR1477662}, while the claim on its support follows from \cite[Theorem 10.1]{MR2178635}.
    
    For $a \notin \lambda(A)$ consider the tracial joint spectral measure $\mu_{A', B'}$ for the pair ${(A', B') = ((A - a I)^{-1}, B)}$. As in the proof of Lemma~\ref{derivative_formula_lemma_2}, $\rho_{A', B'}$ satisfies
    \begin{align*}
        \rho_{A', B'}(0, b) = \frac{1}{2 \pi}\sum_{i = 1}^{d} \left|\Im\left( \lambda_{i}\left(\left(I - \frac{B'}{b}\right) (b A')^{-1} \right)\right) \right| &= \frac{1}{2 \pi b^{2}}\sum_{i = 1}^{d} \left|\Im\left( \lambda_{i}\left((A - a I) (B - b I)\right)\right) \right| \\
        &= \frac{\omega_{A, B}(a, b)}{b^{2}}.
    \end{align*}
    Lemma~\ref{derivative_formula_lemma_2} thus implies that for any $l \in \mathbb{N}$,
    \begin{align}\label{weird_moment_lemma_id_1}
        \int b^{l} \omega_{A, B}(a, b) \d{b} = \frac{1}{(l + 1)(l + 2)!}\frac{\d{^{l + 2}}}{\d {s^{l + 2}}}\tr ((A - a I)^{-1} + s B)_{+}^{l + 1}\Big|_{s = 0}.
    \end{align}
    Using \cite[Lemma 2.2.3]{MR4736654}, the expression on the right-hand side can be rewritten using divided differences as
    \begin{align}\label{weird_moment_lemma_id_2}
        \frac{1}{l + 2}\sum_{i_{1} = 1}^{d} \sum_{i_{2} = 1}^{d} \cdots \sum_{i_{l + 2} = 1}^{d} [(\lambda_{i_{1}} - a)^{-1}, (\lambda_{i_{2}} - a)^{-1}, \ldots, (\lambda_{i_{l + 2}} - a)^{-1}]_{t \mapsto t_{+}^{l}} B_{i_{1}, i_{2}} B_{i_{2}, i_{3}} \cdots B_{i_{k + 2}, i_{1}},
    \end{align}
    where $\lambda_{1}, \lambda_{2}, \ldots, \lambda_{d}$ are the eigenvalues of $A$, and $(B_{i, j})_{i, j}^{d}$ is the matrix of $B$ in the respective eigenbasis.

    Assume for now that the $\lambda_{i_{j}}$s are pairwise distinct. Then, using the explicit expression (\ref{lol2}) for divided differences, we can write
    \begin{align}\label{manipulation}
        \nonumber[(\lambda_{i_{1}} - a)^{-1}, \ldots, (\lambda_{i_{l + 2}} - a)^{-1}]_{t \mapsto t_{+}^{l}} &= \sum_{j : \lambda_{i_{j}} > a} \frac{(\lambda_{i_{j}} - a)^{-l}}{\prod_{j' \neq j} ((\lambda_{i_{j}} - a)^{-1} - (\lambda_{i_{j'}} - a)^{-l})} \\&= \sum_{j : \lambda_{i_{j}} > a} \frac{\prod_{j' = 1}^{l + 2} (\lambda_{i_{j'}} - a)}{\prod_{j' \neq j} (\lambda_{i_{j'}} - \lambda_{i_{j}})}.
    \end{align}

    If $F_{k}$ is an antiderivative of the function $a \mapsto \prod_{j' = 1}^{l + 2} (\lambda_{i_{j'}} - a) a^{k}$, we see that
    \begin{align*}
        \int \sum_{j : \lambda_{i_{j}} > a} \frac{\prod_{j' = 1}^{l + 2} (\lambda_{i_{j'}} - a)}{\prod_{j' \neq j} (\lambda_{i_{j'}} - \lambda_{i_{j}})} a^{k} \d{a} = \sum_{j = 1}^{l + 2} \frac{F_{k}(\lambda_{i_{j}})}{\prod_{j' \neq j} (\lambda_{i_{j'}} - \lambda_{i_{j}})} = (-1)^{l + 1}[\lambda_{i_{1}}, \lambda_{i_{2}}, \ldots, \lambda_{i_{l + 2}}]_{F_{k}}.
    \end{align*}
    Even if the $\lambda_{i_{j}}$s are not pairwise distinct, continuity of divided differences implies that
    \begin{align}\label{weird_moment_lemma_id_3}
         \int a^{k} [(\lambda_{i_{1}} - a)^{-1}, (\lambda_{i_{2}} - a)^{-1}, \ldots, (\lambda_{i_{l + 2}} - a)^{-1}]_{t \mapsto t_{+}^{l}}\d{a} = (-1)^{l + 1}[\lambda_{i_{1}}, \lambda_{i_{2}}, \ldots, \lambda_{i_{l + 2}}]_{F_{k}}.
    \end{align}
    
    Write $e_{j}$ for the degree $j$ elementary symmetric polynomial of $\lambda_{i_{1}}, \lambda_{i_{2}}, \ldots, \lambda_{i_{l + 2}}$. Then, using formula~(\ref{dd_powers}) on divided differences for the power function, we obtain
    \begin{align}\label{weird_moment_lemma_id_4}
        (-1)^{l + 1}[\lambda_{i_{1}}, \lambda_{i_{2}}, \ldots, \lambda_{i_{l + 2}}]_{F_{k}} = \sum_{j = 0}^{l + 2} \frac{(-1)^{j - 1} e_{j}}{l + k + 3 - j} \sum_{\substack{p_{1}, p_{2}, \ldots, p_{l + 2} \geq 0 \\ p_{1} + p_{2} + \ldots + p_{l + 2} = k - j + 2}} \lambda_{i_{1}}^{p_{1}} \lambda_{i_{2}}^{p_{2}} \cdots \lambda_{i_{l + 2}}^{p_{l + 2}}.
    \end{align}
    The sum on the right expands to
    \begin{align*}
        \sum_{\substack{p_{1}, p_{2}, \ldots, p_{l + 2} \geq 0 \\ p_{1} + p_{2} + \ldots + p_{l + 2} = k + 2}} c_{k, l}(p_{1}, p_{2}, \ldots, p_{l + 2})\lambda_{i_{1}}^{p_{1}} \lambda_{i_{2}}^{p_{2}} \cdots \lambda_{i_{l + 2}}^{p_{l + 2}},
    \end{align*}
    where $c_{k, l}$ depends only on the number of positive $p_i$s. If the number of positive $p_i$s is $n$,
    \begin{align}\label{weird_moment_lemma_id_5}
        c_{k, l}(p_{1}, p_{2}, \ldots, p_{l + 2}) = \sum_{j = 0}^{l + 2} \frac{(-1)^{j - 1}}{l + k + 3 - j} \binom{n}{j} = \frac{(-1)^{n - 1}}{(l + k + 3)\binom{l + k + 2}{n}}.
    \end{align}
    The last equality amounts to a special case of the Chu--Vandermonde identity (\ref{gauss_summation}); we discuss this, and related identities, in the next section.

    Using identities (\ref{weird_moment_lemma_id_1}), (\ref{weird_moment_lemma_id_2}), (\ref{weird_moment_lemma_id_3}), (\ref{weird_moment_lemma_id_4}), and (\ref{weird_moment_lemma_id_5}), we have
    \begin{align*}
        & \int a^{k} b^{l} \omega_{A, B}(a, b) \d{a}\d{b} \\
        =& \frac{1}{l + 2}\sum_{n = 1}^{\min(k, l) + 2} \frac{(-1)^{n - 1}}{(l + k + 3)\binom{l + k + 2}{n}} \sum_{i_{1} = 1}^{d}  \cdots \sum_{i_{l + 2} = 1}^{d} \sum_{\substack{p_{1}, p_{2}, \ldots, p_{l + 2} \geq 0 \\ p_{1} + p_{2} + \ldots + p_{l + 2} = k + 2 \\ \#\{j \in \{1, \dots, l + 2\}\mid p_{j} > 0\} = n}} \lambda_{i_{1}}^{p_{1}} B_{i_{1}, i_{2}} \cdots \lambda_{i_{k + 2}}^{p_{k + 2}}B_{i_{k + 2}, i_{1}} \\
        =& \frac{1}{l + 2}\sum_{n = 1}^{\min(k, l) + 2} \frac{(-1)^{n - 1}}{(l + k + 3)\binom{l + k + 2}{n}} \sum_{\substack{i_{1}, \ldots, i_{n}, j_{1}, \ldots, j_{n} \geq 1 \\ i_{1} + \ldots, i_{n} = k + 2 \\ j_{0} + j_{1} + \ldots, j_{n} = l + 2 \\}} \tr(B^{j_{0}} A^{i_{1}} B^{j_{1}} \cdots A^{i_{n}} B^{j_{n}}).
    \end{align*}
    It remains to observe that
    \begin{align*}
         \sum_{\substack{i_{1}, \ldots, i_{n}, j_{1}, \ldots, j_{n} \geq 1 \\ i_{1} + \ldots, i_{n} = k + 2 \\ j_{0} + j_{1} + \ldots, j_{n} = l + 2 \\}} \tr(B^{j_{0}} A^{i_{1}} B^{j_{1}} \cdots A^{i_{n}} B^{j_{n}}) =& \sum_{\substack{i_{1}, \ldots, i_{n}, j_{1}, \ldots, j_{n} \geq 1 \\ i_{1} + \ldots, i_{n} = k + 2 \\ j_{1} + \ldots, j_{n} = l + 2 \\}} j_{n}\tr(A^{i_{1}} B^{j_{1}} \cdots A^{i_{n}} B^{j_{n}}) \\
         =& \frac{l + 2}{n} \sum_{\substack{i_{1}, \ldots, i_{n}, j_{1}, \ldots, j_{n} \geq 1 \\ i_{1} + \ldots, i_{n} = k + 2 \\ j_{1} + \ldots, j_{n} = l + 2 \\}} \tr(A^{i_{1}} B^{j_{1}} \cdots A^{i_{n}} B^{j_{n}});
    \end{align*}
    after a little rearrangement, we obtain the desired result~(\ref{moment_expression}).
\end{proof}

\subsection{Hypergeometric identities and Gegenbauer polynomials}\label{hypergeometric}

We need various identities involving binomial coefficients and factorials; these follow from known properties of hypergeometric functions. See \cite{MR4824080} for a modern treatment of hypergeometric functions.

For $p, q \in \mathbb{N}$, and real numbers $a_{1}, \ldots, a_{p}, b_{1}, \ldots, b_{q}$, the generalized hypergeometric function ${}_pF_q$ is defined by the series
\begin{align}\label{hypergeometric_series}
    \pFq{p}{q}{a_{1}, \ldots, a_{p}}{b_{1}, \ldots, b_{q}}{z} = \sum_{k = 0}^{\infty} \frac{a_{1}^{(k)} \cdots a_{p}^{(k)}}{b_{1}^{(k)} \cdots b_{p}^{(k)}} \frac{z^{k}}{k!}.
\end{align}
We only need to evaluate ${}_pF_q$ when $(p, q) = (2, 1)$ or $(3, 2)$, and one of the $a_{i}$s is a negative integer; in this case, the defining series~(\ref{hypergeometric_series}) is a finite sum. Here, $a^{(k)}$ denotes the rising factorial $a^{(k)} = a (a + 1) \cdots (a + k - 1)$. We denote the falling factorial with $a_{(k)} = a (a - 1) \cdots (a - k + 1)$.

The Chu-Vandermonde identity \cite[4.5.1]{MR4824080} provides the evaluation in case $(p,q) = (2,1)$: if $n \in \mathbb{N}$,
\begin{align}\label{gauss_summation}
     \pFq{2}{1}{a, -n}{c}{1} = \frac{(c - a)^{(n)}}{c^{(n)}}.
\end{align}
Saalsch{\"u}tz's theorem \cite[4.8]{MR4824080} attacks the case of $(p,q) = (3,2)$: if $n \in \mathbb{N}$,
\begin{align}\label{saalschutz}
    \pFq{3}{2}{a, b, -n}{c, 1 + a + b - c - n}{1} = \frac{(c - a)^{(n)} (c - b)^{(n)}}{c^{(n)} (c - a - b)^{(n)}}.
\end{align}

\begin{lem}\label{identity_lemma}
    Let $n_{1}, n_{2} \in \mathbb{N}$. Then,
    \begin{align*}
        \sum_{l = 0}^{\infty} (2 l + 1) (-1)^{l} \frac{(n_{1})_{(l)}}{(n_{1} + l + 1)!} \frac{(n_{2})_{(l)}}{(n_{2} + l + 1)!} = \frac{1}{(n_{1} + n_{2} + 1)!}.
    \end{align*}
\end{lem}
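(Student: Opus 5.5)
The plan is to read the sum as a Parseval identity for shifted Legendre polynomials on $[0,1]$. The summand should split into a product of two Legendre coefficients, and the $(-1)^{l}$ should come from the reflection $x \mapsto 1 - x$. The sum is finite: $(n_{1})_{(l)} = 0$ for $l > n_{1}$, so only $l \le \min(n_{1}, n_{2})$ contributes.

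Let $\widetilde P_{l}(x) = P_{l}(2x - 1)$ be the shifted Legendre polynomials. I will use three standard facts:
\begin{enumerate}[(i)]
    \item they are orthogonal on $[0,1]$ with $\int_{0}^{1} \widetilde P_{l}^{2} \d{x} = 1/(2l+1)$;
    \item they satisfy $\widetilde P_{l}(1 - x) = (-1)^{l} \widetilde P_{l}(x)$;
    \item they have the Rodrigues formula $\widetilde P_{l}(x) = \frac{1}{l!} \frac{\d{}^{l}}{\d{x}^{l}}\left(x^{l}(x-1)^{l}\right)$.
\end{enumerate}

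The first step is the moment computation
\begin{align*}
    \int_{0}^{1} x^{n} \widetilde P_{l}(x) \d{x} = \frac{n!\,(n)_{(l)}}{(n + l + 1)!}.
\end{align*}
To prove it, substitute the Rodrigues formula and integrate by parts $l$ times. The boundary terms vanish because $x^{l}(x-1)^{l}$ has zeros of order $l$ at both endpoints. This leaves
\begin{align*}
    \frac{(-1)^{l} n_{(l)}}{l!} \int_{0}^{1} x^{n}(x-1)^{l} \d{x} = \frac{n_{(l)}}{l!} B(n+1, l+1) = \frac{n_{(l)}\, n!}{(n+l+1)!}.
\end{align*}
The formula also holds for $l > n$, where both sides vanish.

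Using (ii), the same formula gives $\int_{0}^{1} (1 - y)^{n_{2}} \widetilde P_{l}(y) \d{y} = (-1)^{l} n_{2}!\,(n_{2})_{(l)}/(n_{2}+l+1)!$. Multiplying the two moment formulas, the left-hand side of the lemma becomes
\begin{align*}
    \frac{1}{n_{1}!\, n_{2}!} \sum_{l} (2l+1) \langle x^{n_{1}}, \widetilde P_{l}\rangle \langle (1-x)^{n_{2}}, \widetilde P_{l}\rangle,
\end{align*}
with $\langle \cdot, \cdot \rangle$ the $L^{2}[0,1]$ inner product.

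Since both functions are polynomials, they expand finitely in the orthogonal basis $\{\widetilde P_{l}\}$. By (i), the Parseval identity is exact and the sum equals $\langle x^{n_{1}}, (1-x)^{n_{2}}\rangle$. This is the Beta integral $B(n_{1}+1, n_{2}+1) = n_{1}!\, n_{2}!/(n_{1}+n_{2}+1)!$. Dividing by $n_{1}!\, n_{2}!$ gives $1/(n_{1}+n_{2}+1)!$, as claimed.

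The only delicate point is the sign bookkeeping in the integration by parts, which is what makes the $(-1)^{l}$ match the reflection symmetry (ii). If this route stalled, the fallback would be to write the sum as a terminating very-well-poised ${}_{5}F_{4}$ at $-1$ (the factor $2l+1$ is the well-poisedness signature) and apply the corresponding summation formula. The Legendre argument is much shorter, so I would use it.
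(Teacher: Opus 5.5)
Your proof is correct, and it takes a genuinely different route from the paper's. Each step checks out:
\begin{itemize}
  \item the shifted Rodrigues formula;
  \item the moment formula $\int_0^1 x^n\widetilde P_l(x)\,dx = n!\,(n)_{(l)}/(n+l+1)!$, where the sign from integrating by parts $l$ times cancels against $(x-1)^l=(-1)^l(1-x)^l$;
  \item the reflection that produces the $(-1)^l$;
  \item the Parseval identity, which is exact because $x^{n_1}$ lies in the span of $\widetilde P_0,\dots,\widetilde P_{n_1}$.
\end{itemize}

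The paper instead argues entirely at the level of the series. Assuming $n_1\le n_2$, it reindexes $l\mapsto n_1-l$ and clears denominators. It then notices that the new summand $q(l)$ is palindromic under $l\mapsto 2n_1+1-l$, so it doubles the range of summation. Next it splits the factor $2n_1+1-2l$, and the constant part cancels by antisymmetry. The remainder is a terminating ${}_2F_1(-2n_1,-n_1-n_2;\,n_2-n_1+2;\,-1)$ of Kummer type, whose closed form the paper quotes from the literature.

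That approach keeps the lemma in the same hypergeometric toolkit as the paper's other identities (Chu--Vandermonde, Saalsch\"utz). However, it relies on a fairly specialized external summation and gives no reason for the shape of the answer. Your argument is self-contained and uses only classical facts about Legendre polynomials. It also explains both sides: $2l+1$ is $\|\widetilde P_l\|^{-2}$, and $1/(n_1+n_2+1)!$ is the Beta integral $\langle x^{n_1},(1-x)^{n_2}\rangle$ divided by $n_1!\,n_2!$. It also fits well with the Gegenbauer orthogonality argument the paper uses in Step 3.

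A small correction to your fallback: the sum is a very-well-poised ${}_4F_3$ at $-1$, with $a=1$, $b=-n_1$, $c=-n_2$, not a ${}_5F_4$. The classical evaluation $\Gamma(1+a-b)\Gamma(1+a-c)/\bigl(\Gamma(1+a)\Gamma(1+a-b-c)\bigr)$ of that series would also give the lemma in one line.
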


\begin{proof}
     Without loss of generality, we assume that $n_{1} \leq n_{2}$. With some simple shuffling, the claim is equivalent to
    \begin{align*}
        \sum_{l=0}^{n_1} \overbrace{(2 n_{1} + 1 - 2 l) (-1)^{n_{1} - l} \frac{(n_{1} + n_{2} + 1)_{(2 n_{1} + 1 - l)} (n_{1} + n_{2} + 1)_{(l)}}{(l)! (2 n_{1} + 1 - l)!}}^{\text{denoted }q(l)} &= \frac{(n_{1} + n_{2} + 1)!}{n_{1} ! n_{2}!}.
    \end{align*}

    Notice that the sum extends palindromically until $2 n_{1} + 1$, in that $q(l) = q(2n_1 + 1- l)$. Summing from $l = 0$ to $l = 2n_1 + 1$ doubles the identity.

	By distributing the first factor, write $q(l) = (2n_1 + 1) (-1)^{n_1 - l} (\cdots) + (-2l) (-1)^{n_1 - l} (\cdots)$. This splits the sum into two; the first sum vanishes, as the $l$th term cancels the $(2 n_{1} + 1 - l)$th term. The second sum can be written in terms of
    \begin{align*}
        \pFq{2}{1}{-2 n_{1}, -n_{1} - n_{2}}{n_{2} - n_{1} + 2}{-1},
    \end{align*}
    for which we find an expression in \cite[Section 3]{MR1406215}.
\end{proof}

We write $C_{k}^{3/2}$ for the $k$th Gegenbauer polynomial with parameter $3/2$, normalized to equal $1$ at $1$. These are orthogonal polynomials on $[-1, 1]$ with weight $1 - x^{2}$. With our normalization, they satisfy \cite[9.2.2]{MR4824080}, for appropriate $x, t, k$, and $l$,
\begin{align}
    (1 - 2 x t + t^{2})^{-3/2} =& \sum_{k = 0}^{\infty} \frac{(k + 1) (k + 2)}{2} C_{k}^{3/2}(x) t^{k}, \label{gegenbauer_series} \\
    \int_{-1}^{1} C_{k}^{3/2}(x) C_{l}^{3/2}(x) (1 - x^{2}) \d{x} =& \delta_{k, l} \frac{8}{(2 k + 3) (k + 1) (k + 2)}, \label{gegenbauer_orthogonality} \\
    C_{k}^{3/2}(x) =& \pFq{2}{1}{k + 3, -k}{2}{\frac{1 - x}{2}}\label{gegenbauer_hypergeometric}.
\end{align}

\subsection{Free probability}

We provide a brief overview of free probablity; an interested reader may find a comprehensive account in \cite{MR3585560}.

We work with probability measures with compact support. The moments of such a measure $\mu$ are denoted by $m_{k}(\mu) := \mathbb{E}_{X \sim \mu} X^{k}$.

Central to the notion of free convolution are free cumulants, which we denote by $\kappa_{k}(\mu)$. If
\begin{align}
    \psi_{\mu}(z) = z \left(1 + \sum_{k = 1}^{\infty} m_{k}(\mu) z^{k}\right) \label{moment_generator}
\end{align}
then the free cumulants of $\mu$ can be extracted from the formal inverse of $\psi_{\mu}(z)$ in $z$, with
\begin{align*}
    \psi_{\mu}^{-1}(z) = \frac{z}{1 + \sum_{k = 1}^{\infty} \kappa_{k}(\mu) z^{k}}.
\end{align*}

Fundamental properties of free cumulants are \cite{MR1268597,MR839105}
\begin{align}
    \kappa_{k}(a \mu) &= a^{k} \kappa_{k}(\mu), \label{cumulant_rule_1}\\
    \kappa_{k}(\mu \boxplus \nu) &= \kappa_{k}(\mu) + \kappa_{k}(\nu). \label{cumulant_rule_2}
\end{align}

Moments and free cumulants are related by the following identities.

\begin{lem}\label{cumulant_moment_identities}
    Let $\mu$ be a compactly-supported probability measure. For $n \geq k \geq 1$, let
    \begin{align*}
        M_{n, k}(\mu)~&= \sum_{\substack{i_{1}, i_{2}, \ldots, i_{k} \geq 1 \\ i_{1} + i_{2} + \ldots + i_{k} = n}} m_{i_{1}}(\mu) m_{i_{2}}(\mu) \cdots m_{i_{k}}(\mu), \\
        K_{n, k}(\mu)~&= \sum_{\substack{i_{1}, i_{2}, \ldots, i_{k} \geq 1 \\ i_{1} + i_{2} + \ldots + i_{k} = n}} \kappa_{i_{1}}(\mu) \kappa_{i_{2}}(\mu) \cdots \kappa_{i_{k}}(\mu).
    \end{align*}

    Then, for $n \geq k \geq 1$, we have the identities
    \begin{align}
        M_{n, k}(\mu)~&= \sum_{r = k}^{n} \frac{k}{r} \binom{n}{r - k} K_{n, r}(\mu), \label{mk_identity}\\
        K_{n, k}(\mu)~&= \sum_{r = k}^{n} (-1)^{k - r} \frac{k}{r}\binom{n + r - k - 1}{r - k} M_{n, r}(\mu). \label{km_identity}
    \end{align}
\end{lem}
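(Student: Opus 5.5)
We work with generating functions throughout. Let $M(z) = \sum_{i \geq 1} m_{i}(\mu) z^{i}$ and $K(z) = \sum_{i \geq 1} \kappa_{i}(\mu) z^{i}$, viewed as formal power series. Then $M_{n,k}(\mu) = [z^{n}] M(z)^{k}$ and $K_{n,k}(\mu) = [z^{n}] K(z)^{k}$. The definition of free cumulants via $\psi_{\mu}$ translates into a functional equation between $M$ and $K$. Indeed, $\psi_{\mu}(z) = z(1 + M(z))$, and $\psi_{\mu}^{-1}(w) = w/(1 + K(w))$. Substituting $w = \psi_{\mu}(z)$ gives
\begin{align*}
    z = \frac{z(1+M(z))}{1 + K(z(1+M(z)))}, \quad\text{i.e.}\quad 1 + M(z) = 1 + K\bigl(z(1+M(z))\bigr).
\end{align*}
This is the classical moment--cumulant relation $M(z) = K(z(1+M(z)))$. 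Equivalently, with $w = z(1+M(z))$ we have $z = w/(1+K(w))$. Both identities are then coefficient extractions of powers of $M$ and $K$, and we obtain them by Lagrange inversion.

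For (\ref{mk_identity}), we write $M(z)^{k} = K(w)^{k}$ with $w = \psi_{\mu}(z)$, so $z = w/\phi(w)$ where $\phi(w) = 1 + K(w)$. The Lagrange--Bürmann formula, in the form $[z^{n}] H(w(z)) = \frac{1}{n}[w^{n-1}] H'(w)\phi(w)^{n}$, applies with $H = K^{k}$. This gives
\begin{align*}
    [z^{n}] K(w)^{k} = \frac{1}{n}[w^{n-1}]\, k K(w)^{k-1}K'(w)\,(1+K(w))^{n}.
\end{align*}
We expand $(1+K)^{n} = \sum_{j}\binom{n}{j} K^{j}$ and write $K^{k-1+j}K' = (K^{k+j})'/(k+j)$. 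Then $[w^{n-1}](K^{k+j})' = n\,[w^{n}]K^{k+j} = n K_{n,k+j}$. Setting $r = k + j$ yields $\sum_{r} \frac{k}{r}\binom{n}{r-k} K_{n,r}$, which is (\ref{mk_identity}).

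For (\ref{km_identity}), we go the other way and write $K(w)^{k}$ in terms of $M$. We have $K(w) = M(z)$ with $z = w/(1+M(z))$. Lagrange inversion applies now with $\phi = 1 + M$ and $H = M^{k}$, giving
\begin{align*}
    [w^{n}] M(z(w))^{k} = \frac{1}{n}[z^{n-1}]\, kM^{k-1}M'\,(1+M)^{-n}.
\end{align*}
We expand $(1+M)^{-n} = \sum_{j}(-1)^{j}\binom{n+j-1}{j}M^{j}$ and repeat the same derivative trick. This gives $\sum_{r}(-1)^{r-k}\frac{k}{r}\binom{n+r-k-1}{r-k}M_{n,r}$, which is (\ref{km_identity}); here $(-1)^{r-k} = (-1)^{k-r}$.

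The only delicate point is getting the functional equations right: we need $z = w/(1+K(w))$ in one direction and $w = z/(1+M(z))$ in the other. These follow from the inverse relation in the definition together with $K(w) = M(z)$. Everything else is routine formal power series manipulation, and no convergence issues arise since we work formally. Alternatively, (\ref{km_identity}) follows from (\ref{mk_identity}) by inverting the triangular system. Under that route, the main obstacle is checking the binomial inversion identity $\sum_{r} (-1)^{r-k}\frac{k}{r}\binom{n+r-k-1}{r-k}\frac{r}{s}\binom{n}{s-r} = \delta_{k,s}$, which is a Chu--Vandermonde instance (\ref{gauss_summation}). The Lagrange route avoids this check.
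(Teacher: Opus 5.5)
Your argument is correct, and it differs from the paper's in how Lagrange inversion is used.

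The paper proceeds in three steps:
\begin{itemize}
\item it expands $(\psi_\mu(z)/z-1)^k$ binomially;
\item it applies the power form $k[z^k]g^n = n[z^{-n}]f^{-k}$ to each $\psi_\mu^l$ separately, obtaining $[z^n](1+K)^{n+l}$;
\item it is then left with the alternating sum $\sum_{l}(-1)^{k-l}\binom{k}{l}\frac{l}{n+l}\binom{n+l}{r}$, which it collapses to $\frac{k}{r}\binom{n}{r-k}$ by a ${}_2F_1$ evaluation.
\end{itemize}
For (\ref{km_identity}) the paper only says it follows ``similarly'', or by checking that the two triangular systems are mutually inverse, which is another hypergeometric identity.

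You instead observe that $\psi_\mu(z)/z - 1 = K(\psi_\mu(z))$ and use the B\"urmann form with $H = K^k$. That form is just the paper's Theorem~\ref{lagrange_inversion} extended from powers to arbitrary $H$ by linearity. The derivative trick $K^{k-1+j}K' = (K^{k+j})'/(k+j)$ then gives the coefficients $\frac{k}{r}\binom{n}{r-k}$ directly, with no summation identity needed. Identity (\ref{km_identity}) comes out of the same computation with the roles of $M$ and $K$ swapped and $(1+M)^{-n}$ replacing $(1+K)^n$.

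Your route is therefore shorter and treats both identities symmetrically. The paper's route stays closer to Mottelson's argument and to the plain power form of inversion, at the cost of an extra hypergeometric evaluation for each identity.

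Two cosmetic slips, neither of which affects the computation:
\begin{itemize}
\item In the second step, the relation $z = w/(1+M(z))$ has the form $z = w\,\phi(z)$ with $\phi = (1+M)^{-1}$. It does not correspond to $\phi = 1+M$ in the convention of your first step. Your displayed formula with $(1+M)^{-n}$ is nonetheless the right one.
\item In your last paragraph, ``$w = z/(1+M(z))$'' should read $z = w/(1+M(z))$.
\end{itemize}
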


Our argument to prove Lemma~\ref{cumulant_moment_identities} will be a slight generalization of Mottelson's \cite{mottelson2012introduction}, which further relies on the Lagrange inversion theorem \cite[Theorem 5.4.2]{MR4621625}. Denote by $[z^k] f(z)$ the coefficient of $z^k$ in $f(z)$.
\begin{thm}\label{lagrange_inversion}
    Let $f(z)$ be a formal power series with complex coefficients, with $[z^{0}]f(z) = 0$ and $[z^{1}]f(z) \neq 0$. Then, $f(z)$ has a formal inverse $g(z)$, and for any $k, n \in \Z$,
    \begin{align}\label{lagrange_inversion_identity}
        k[z^{k}] g^{n} = n [z^{-n}] f^{-k}.
    \end{align}
\end{thm}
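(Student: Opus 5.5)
We prove \eqref{lagrange_inversion_identity} by the classical residue argument: compose with $f$ and differentiate. We work in the field of formal Laurent series $\C((z))$, where we write $[z^{-1}]$ for the formal residue.

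First we settle existence. Since $[z^{0}]f = 0$ and $[z^{1}]f \neq 0$, the coefficients of a compositional inverse $g$ can be solved for recursively. The resulting $g$ again has zero constant term and nonzero linear coefficient, and $g(f(z)) = f(g(z)) = z$.

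Next we record two facts about $f$. Write $f = c z (1 + h(z))$ with $c \neq 0$ and $h$ a power series without constant term. Then $f^{m}$ is a well-defined Laurent series of order exactly $m$ for every $m \in \Z$. Moreover, for $m \neq 0$,
\begin{align*}
    f^{m - 1} f' = \frac{1}{m} \frac{\d{}}{\d{z}} f^{m},
\end{align*}
and a derivative of a Laurent series has zero residue. For $m = 0$,
\begin{align*}
    \frac{f'}{f} = \frac{1}{z} + \frac{h'}{1 + h},
\end{align*}
which has residue exactly $1$.

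Now fix $k, n \in \Z$ and expand $g^{n} = \sum_{j \geq n} a_{j} z^{j}$, so that $a_{k} = [z^{k}] g^{n}$. If $k < n$, then $a_k=0$; and $f^{-k}$ has order $-k > -n$, so $[z^{-n}] f^{-k} = 0$ too. Both sides of \eqref{lagrange_inversion_identity} therefore vanish, so we may assume $k \geq n$ where convenient. Substituting $f$ into $g^n$ gives
\begin{align*}
    z^{n} = \sum_{j \geq n} a_{j} f(z)^{j}.
\end{align*}
This substitution is legitimate formally, since $f^{j}$ has order $j$ and so only finitely many terms contribute to each coefficient. Differentiating termwise gives
\begin{align*}
    n z^{n - 1} = \sum_{j} j\, a_{j} f^{j - 1} f'.
\end{align*}
Multiplying by $f^{-k}$ yields
\begin{align*}
    n z^{n - 1} f^{-k} = \sum_{j} j\, a_{j} f^{j - k - 1} f'.
\end{align*}

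Finally we take $[z^{-1}]$ of both sides. On the left we get $n [z^{-n}] f^{-k}$. On the right, by the two facts above, every term with $j \neq k$ has zero residue, and the term $j = k$ contributes $k\, a_{k}$. This is exactly \eqref{lagrange_inversion_identity}.

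The only delicate point is the bookkeeping that justifies applying $[z^{-1}]$ termwise to the infinite sum. This holds because each coefficient of the sum involves only finitely many $j$, by the order count on $f^{j}$. Negative $n$ or $k$ cause no extra trouble, since all objects live in $\C((z))$.
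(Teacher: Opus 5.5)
Your proof is correct and is the standard residue argument. The paper gives no proof of its own and simply cites the result (Theorem 5.4.2 of its reference, Stanley's \emph{Enumerative Combinatorics}, Vol.~2), where it is proved exactly as you do: substitute $f$ into $g^{n}$, differentiate, divide by $f^{k}$, and use that derivatives have zero residue while $f'/f$ has residue $1$. The points you leave implicit are routine: a one-sided compositional inverse is two-sided, and composition with $f$ is a continuous ring homomorphism of $\C((z))$, so $(g^{n})\circ f = (g\circ f)^{n}$ holds also for $n<0$ and termwise differentiation is legitimate.
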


\begin{proof}[Proof of Lemma \ref{cumulant_moment_identities}]
    Let $\psi_{\mu}$ be the power series as in (\ref{moment_generator}).

    Expressing the moments using $\psi_{\mu}(z)$, we obtain
    \begin{align*}
        & M_{n, k}(\mu) = [z^{n}] \sum_{n = k}^{\infty} M_{n, k}(\mu) z^{n} = [z^{n}] \left(\frac{\psi_{\mu}(z)}{z} - 1\right)^{k} = \sum_{l = 1}^{k} (-1)^{k - l} \binom{k}{l} [z^{n + l}]\left(\psi_{\mu}(z)^{l}\right) \\
        &\quad= \sum_{l = 1}^{k} (-1)^{k - l} \binom{k}{l} \frac{l}{n + l} [z^{-l}] (\psi_{\mu}^{-1}(z))^{-(n + l)} = \sum_{l = 1}^{k} (-1)^{k - l} \binom{k}{l} \frac{l}{n + l} [z^{n}] \left(1 + \sum_{j = 1}^{\infty} \kappa_{j}(\mu) z^{j}\right)^{n + l} \\
        &\quad= \sum_{l = 1}^{k} (-1)^{k - l} \binom{k}{l} \frac{l}{n + l} \sum_{r = 1}^{n} \binom{n + l}{r} K_{n, r}(\mu).
    \end{align*}

    It remains to verify that
    \begin{align*}
        \sum_{l = 1}^{k} (-1)^{k - l} \binom{k}{l} \frac{l}{n + l} \binom{n + l}{r} = \frac{k}{r} \binom{n}{r - k},
    \end{align*}
    which boils down to the evaluation of $\pFq{2}{1}{1 - k, n + 1}{n - r + 2}{1}$.

    This proves~(\ref{mk_identity}). The second identity~(\ref{km_identity}) can be proven similarly; or can be proven by verifying that the two linear systems (\ref{mk_identity}) and (\ref{km_identity}) are inverses of each other; this, too, boils down to a hypergeometric identity.
\end{proof}

\section{Leading order calculation}\label{necessity}

Theorem~\ref{thm2} implies that $f^{(4)} \geq 0$ is necessary for the comparison of convolutions~(\ref{main_ineq}) in Theorem~\ref{thm1}. In the following proposition, we sketch a simpler independent argument for this necessity. The argument also shows that $f^{(4)} \geq 0$ is sufficient to the leading order.

\begin{prop}
    Let $\mu$ and $\nu$ be compactly-supported probability measures, and $f \in C^{4}(\R)$. Then,
    \begin{align}\label{leading_order_expansion}
        \int f(t) \d{(\mu * \varepsilon \nu)}(t) - \int f(t) \d{(\mu \boxplus \varepsilon \nu)}(t) = \varepsilon^{2} \Var(\nu) \int f^{(4)}(t) \d{\widetilde{\mu}}(t) + o(\varepsilon^{2}),
    \end{align}
    for a positive measure $\widetilde{\mu}$, supported on $\conv(\supp(\mu))$, that depends only on $\mu$.
\end{prop}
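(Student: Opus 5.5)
The plan is to compute both sides to second order in $\varepsilon$, and to recognise the difference as a trapezoid-rule error, whose Peano kernel is non-negative. The argument is first carried out for polynomial $f$, where everything reduces to moments and free cumulants; it is then extended to $f \in C^{4}$ by approximation.

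\emph{Reduction and the classical side.} Translating $\nu$ by its mean translates both $\mu * \varepsilon\nu$ and $\mu \boxplus \varepsilon\nu$ by the same amount $\varepsilon m_{1}(\nu)$. This follows from (\ref{cumulant_rule_2}), since only $\kappa_{1}$ changes. Replacing $f$ by a translate, we may therefore assume $m_{1}(\nu)=0$. Taylor's theorem with $X\sim\mu$ and $Y\sim\nu$ independent gives
\begin{align*}
\int f \,\d{(\mu*\varepsilon\nu)} = \int f\,\d{\mu} + \frac{\varepsilon^{2}}{2}\Var(\nu)\int f''\,\d{\mu} + O(\varepsilon^{3}).
\end{align*}

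\emph{The free side.} By (\ref{cumulant_rule_1}) and (\ref{cumulant_rule_2}), $\kappa_{k}(\mu\boxplus\varepsilon\nu)=\kappa_{k}(\mu)+\varepsilon^{k}\kappa_{k}(\nu)$. With $\kappa_{1}(\nu)=0$, each moment of $\mu\boxplus\varepsilon\nu$ therefore agrees up to $O(\varepsilon^{3})$ with the corresponding moment of $\mu\boxplus\sigma_{t}$, where $\sigma_{t}$ is the semicircle law of variance $t=\varepsilon^{2}\Var(\nu)$. For $f(t)=t^{n}$, the first-order term in $t$ comes from inserting a single $\kappa_{2}$-block into the moment–cumulant expansion. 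This yields the standard free heat-flow derivative
\begin{align*}
\frac{\d{}}{\d{t}}\Big|_{t=0}\int f\,\d{(\mu\boxplus\sigma_{t})} = \frac12\iint [x,y]_{f'}\,\d{\mu}(x)\,\d{\mu}(y).
\end{align*}
One can verify this on $f(t)=t^{n}$ using (\ref{dd_powers}): the right-hand side becomes $\frac n2\sum_{i+j=n-2}m_{i}(\mu)m_{j}(\mu)$, which matches the free cumulant computation. Subtracting the two expansions, and using $[x,x]_{f'}=f''(x)$, the difference equals
\begin{align*}
\frac{\varepsilon^{2}}{2}\Var(\nu)\iint\Big(\tfrac12\big(f''(x)+f''(y)\big)-[x,y]_{f'}\Big)\d{\mu}(x)\,\d{\mu}(y)+O(\varepsilon^{3}).
\end{align*}

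\emph{Positivity.} Set $g=f'$. The integrand is $\frac{1}{y-x}$ times the trapezoid-rule error for $\int_{x}^{y}g'$. The Peano kernel of the trapezoid rule gives
\begin{align*}
\tfrac12\big(g'(x)+g'(y)\big)-\frac{g(y)-g(x)}{y-x} = \frac{(y-x)^{2}}{2}\int_{0}^{1}s(1-s)\,g'''\big(x+s(y-x)\big)\,\d{s},
\end{align*}
with a non-negative kernel. Since $g'''=f^{(4)}$, we define $\widetilde{\mu}$ by pushing forward the measure
\begin{align*}
\tfrac14 (y-x)^{2}s(1-s)\,\d{\mu}(x)\,\d{\mu}(y)\,\d{s}
\end{align*}
under $(x,y,s)\mapsto x+s(y-x)$. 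This measure is positive, depends only on $\mu$, and is supported on $\conv(\supp(\mu))$.

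\emph{Extension to $C^{4}$; main obstacle.} The step I expect to be delicate is passing from polynomials to general $f\in C^{4}$ while keeping the remainder $o(\varepsilon^{2})$. All the measures involved are supported in a fixed compact interval $J$, by the support inclusion for $\boxplus$, so only $f|_{J}$ matters. For $f\in C^{4}(J)$, write $f=p+r$ with $p$ a polynomial and $\|r\|_{C^{4}(J)}<\eta$, for instance by integrating four times a polynomial approximation of $f^{(4)}$. For $r$, I need a bound of the form $C\varepsilon^{2}\|r\|_{C^{4}(J)}$ on the difference of the two sides, and similarly $\big|\varepsilon^{2}\Var(\nu)\int r^{(4)}\,\d{\widetilde{\mu}}\big|\le C\varepsilon^{2}\eta$.

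My first attempt is the operator model: take $X$ and $\varepsilon Y$ free (respectively, commuting and independent) and expand $\tau f(X+\varepsilon Y)$ to second order using Daleckii–Krein type formulas. The first-order term vanishes because $\tau$ of the centred $Y$ paired against a function of $X$ is zero. The second-order term is bounded by $C\varepsilon^{2}\|r\|_{C^{3}}$, via the divided-difference representation together with (\ref{dd_mean_value_theorem}). Combined with the exact polynomial expansion, this gives
\begin{align*}
\limsup_{\varepsilon\to0}\varepsilon^{-2}\,\big|\mathrm{LHS}-\mathrm{RHS}\big|\le C\eta
\end{align*}
for every $\eta>0$, which proves (\ref{leading_order_expansion}). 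If the operator-derivative bound proves troublesome, a fallback is to use only the weaker conclusion, namely necessity and leading-order sufficiency, for polynomial $f$, since the proposition is presented there only as a sketch.
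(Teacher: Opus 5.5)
Your proposal is correct and follows the paper's own proof. Both reduce the difference to the second-order functional $\int f''\,\d{\mu}-\iint [x,y]_{f'}\,\d{\mu}(x)\,\d{\mu}(y)$ and obtain positivity from its Peano kernel. The paper evaluates that kernel pointwise by testing with $f=\frac16(t-c)_+^3$, while your trapezoid-rule form writes $\widetilde{\mu}$ explicitly as a pushforward; your single-$\kappa_2$/semicircle computation likewise stands in for the paper's direct freeness expansion.

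Your $C^4$ extension supplies the step the paper only asserts as ``by approximation,'' so no fallback is needed. The simplest way to get the remainder bound is the positive coupling measure $\tau(E_X(\d{x})E_{X+\varepsilon Y}(\d{y}))$, which gives $\bigl|\tau r(X+\varepsilon Y)-\tau r(X)-\varepsilon\tau(r'(X)Y)\bigr|\le\frac{\varepsilon^{2}}{2}\|r''\|_{\infty}\tau(Y^{2})$ for every $r\in C^{2}$ without any operator differentiability.
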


\begin{proof}[Proof sketch]
    Start with the left-hand side of~(\ref{leading_order_expansion}) when $f(t) = t^{k}$; that is, with the expression $m_{k}(\mu * \varepsilon \nu) - m_{k}(\mu \boxplus \varepsilon \nu)$. Moments of the classical convolution can be expanded with the binomial theorem. For the free convolution, we can use the main freeness property~(\ref{freenes_condition}) to express the moments of the free convolution in terms of the moments of $\mu$ and $\nu$, at least up to $o(\varepsilon^{2})$; see \cite[1.12]{MR3585560} for a primer on how to manipulate such expressions. We obtain
    \begin{align}\label{k_expansion}
        \int t^{k} \d{(\mu * \varepsilon \nu)}(t) - \int t^{k} \d{(\mu \boxplus \varepsilon \nu)}(t) = \frac{\varepsilon^{2}}{2} \Var(\nu) \left(k(k - 1) m_{k - 2} - k\sum_{\substack{i_{0}, i_{1}\geq 0 \\ i_{0} + i_{1} = k - 2}} m_{i_{0}} m_{i_{1}} \right) + o(\varepsilon^{2}),
    \end{align}
    where $m_{k} = m_{k}(\mu)$.

    With some simple manipulation, we see that
    \begin{align*}
       \left(k(k - 1) m_{k - 2} - k\sum_{\substack{i_{0}, i_{1}\geq 0 \\ i_{0} + i_{1} = k - 2}} m_{i_{0}} m_{i_{1}} \right) = \mathbb{E}_{X \sim \mu} f''(X) - \mathbb{E}_{\substack{X, X' \sim \mu \\ X, X' \text{iid}}} [X, X']_{f'} =: I_{\mu}\left(f^{(4)}\right),
    \end{align*}
    for $f(t) = t^{k}$; and by approximation, for any $f \in C^{4}(\R)$,
    \begin{align*}
        \int f(t) \d{(\mu * \varepsilon \nu)}(t) - \int f(t) \d{(\mu \boxplus \varepsilon \nu)}(t) = \frac{\varepsilon^{2}}{2} \Var(\nu) I_{\mu}\left(f^{(4)}\right) + o(\varepsilon^{2}).
    \end{align*}

	It remains to check that $I_\mu$ is a positive linear functional, cf.~(\ref{leading_order_expansion}). 
    Consider $f(t) = \frac{1}{6}t_{+}^{3}$. We have $f^{(4)} = \delta_{0}$, and hence
    \begin{align*}
        I_{\mu}(\delta_{0}) = \mathbb{E}_{X \sim \mu} X_{+} - \frac{1}{2}\mathbb{E}_{\substack{X, X' \sim \mu \\ X, X' \text{iid}}} \frac{X X_{+} - X' X'_{+}}{X - X'} = \frac{1}{2}\mathbb{E}_{\substack{X, X' \sim \mu \\ X, X' \text{iid}}} \frac{X X'_{+} - X' X_{+}}{X - X'}.
    \end{align*}
    The last expression inside the expectation is non-negative pointwise, and hence $I_{\mu}(\delta_{0}) \geq 0$. Similarly, $I_{\mu}(\delta_{c}) \geq 0$ for any $c \in \R$. This gives the positivity of $I_{\mu}$.
\end{proof}

\section{Proofs of Theorems~\ref{thm2} and~\ref{thm1}}\label{main_section}

We prove Theorem~\ref{thm2} in four main steps.

\subsection{Step 1}

We start by introducing auxiliary functionals, which we shall use to express the difference of classical and free convolutions. For $l \in \N$, define a functional $I_{l}^\mu$ by letting
\begin{align}\label{I_first_definition}
    I_l^\mu\left(t \mapsto t^{n}\right) = \frac{n!}{(n + l + 3)! (n - l)!}\sum_{k \geq 1}  \frac{(-1)^{k - 1} (n - k + 2)!}{k!} \frac{(l + k)!}{(l - k + 2)!}  M_{n + 2, k}(\mu),
\end{align}
and extending linearly; here, $M_{n,k}(\mu)$ depends on $\mu$ as in Lemma~\ref{cumulant_moment_identities}. Set $1/m!$ to be $0$ for negative $m$, so  that $I_{l}^\mu$ vanishes on polynomials of degree less than $l$. Note further that the sum for $I_l^\mu$ has $l + 2$ terms.

The $I_l^\cdot$ help us in our quest to define the \emph{convolution comparison measure} for measures $\mu$ and $\nu$; define $\widetilde{m}_{\mu, \nu}$ by letting
\begin{align}\label{basic_ccm_series}
    \widetilde{m}_{\mu, \nu}((t_{\mu}, t_{\nu}) \mapsto t_{\mu}^{n_{\mu}} t_{\nu}^{n_{\nu}}) = \sum_{l = 0}^{\infty} (-1)^{l} (2 l + 3) I_l^\mu\left(t \mapsto t^{n_{\mu}}\right) I_l^\nu\left(t \mapsto t^{n_{\nu}}\right),
\end{align}
and extending linearly to bivariate real polynomials. In the rest of the proof, we verify that $\widetilde{m}_{\mu, \nu}$ satisfies~(\ref{general_identity}) and present an alternate expression for it to verify that it is a positive measure.

\begin{prop}\label{prop1}
    Let $p$ be a polynomial, and $a, b \in \R \setminus \{0\}$. Then,
    \begin{align}\label{ccm_identity_alt2}
        \widetilde{m}_{\mu, \nu}((x, y) \mapsto p^{(4)}(a x + b y)) = \frac{(a \mu * b \nu)(p) - (a \mu \boxplus b \nu)(p)}{a^{2} b^{2}}.
    \end{align}
\end{prop}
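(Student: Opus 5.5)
By linearity and homogeneity I would reduce Proposition~\ref{prop1} to $p(t)=t^{N}$. Both sides are linear in $p$. Expanding $(ax+by)^{N-4}$ binomially turns the left-hand side of (\ref{ccm_identity_alt2}) into
\begin{align*}
N_{(4)}\sum_{n_{\mu}+n_{\nu}=N-4}\binom{N-4}{n_{\mu}} a^{n_{\mu}} b^{n_{\nu}}\, \widetilde{m}_{\mu,\nu}\left(t_{\mu}^{n_{\mu}} t_{\nu}^{n_{\nu}}\right).
\end{align*}
On the right-hand side, the moments of $a\mu * b\nu$ are $\sum_{j}\binom{N}{j}a^{j}b^{N-j}m_{j}(\mu)m_{N-j}(\nu)$. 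For the free convolution I would use (\ref{cumulant_rule_1})--(\ref{cumulant_rule_2}): the free cumulants of $a\mu\boxplus b\nu$ are $a^{k}\kappa_{k}(\mu)+b^{k}\kappa_{k}(\nu)$. The identity then becomes an equality of polynomials in $a,b$, and I would compare coefficients of $a^{n_{\mu}+2}b^{n_{\nu}+2}$.

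For the free moment, I would write $m_{N}=M_{N,1}$ and invert via (\ref{mk_identity}) to get $m_{N}=\sum_{r}\frac{1}{r}\binom{N}{r-1}K_{N,r}$. Expanding $K_{N,r}(a\mu\boxplus b\nu)$ over which factors come from $\mu$ and which from $\nu$ gives $K_{N,r}=\sum_{s}\binom{r}{s}a^{n_1}b^{n_2}K_{n_1,s}(\mu)K_{n_2,r-s}(\nu)$, with $n_1+n_2=N$. I would then convert back to moments with (\ref{km_identity}). The coefficient of $a^{n_1}b^{n_2}$ thus becomes a double sum $\sum_{k_1,k_2}c(k_1,k_2)\,M_{n_1,k_1}(\mu)M_{n_2,k_2}(\nu)$, where $c$ is an explicit finite sum of binomial products. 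The classical term contributes only to $k_1=k_2=1$, and its contribution exactly cancels the free coefficient there. This cancellation also accounts for the vanishing of the terms with $n_1\le 1$ or $n_2\le 1$, consistent with the prefactor $1/(a^2b^2)$.

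On the left, by (\ref{basic_ccm_series}) and (\ref{I_first_definition}), the coefficient of $M_{n_{\mu}+2,k_1}(\mu)M_{n_{\nu}+2,k_2}(\nu)$ is
\begin{align*}
\sum_{l}(-1)^{l}(2l+3)\,\alpha_{l}(n_{\mu},k_1)\,\alpha_{l}(n_{\nu},k_2),
\end{align*}
a separable product summed over $l$, where $\alpha_{l}(n,k)$ denotes the explicit coefficient in (\ref{I_first_definition}). So the whole proof reduces to a closed-form identity: this $l$-sum must equal $c(k_1,k_2)$ times the appropriate binomial and $N_{(4)}$ normalisation.

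The \textbf{main obstacle} is proving this combinatorial identity. My first attempt would be to compute $c(k_1,k_2)$ in closed form. The inner sums over $r$ and $s$ are of Chu--Vandermonde or Saalsch\"utz type, (\ref{gauss_summation})--(\ref{saalschutz}), and I expect them to collapse to something of the form $\frac{(-1)^{k_1+k_2}\,k_1k_2}{(k_1+k_2)\cdots}$ times binomials in $n_1,n_2$. Next I would evaluate the $l$-sum. After factoring out the $k$-dependence, the factors $(l+k)!/(l-k+2)!$ are polynomials in $l$. I would expand them in the basis $(n)_{(l)}/(n+l+1)!$ so that Lemma~\ref{identity_lemma} applies. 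Lemma~\ref{identity_lemma} was stated for exactly this pairing with the weight $(2l+1)(-1)^{l}$, and shifting $l$ and the parameters by one should convert it to the weight $(2l+3)$. If the polynomial factors do not decompose cleanly, I would instead run an induction on $k_1+k_2$, using a contiguous-relation recurrence in $k$ for $\alpha_l(n,k)$ and checking the base case $k_1=k_2=1$ directly from Lemma~\ref{identity_lemma}.
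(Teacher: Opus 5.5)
Your reduction to $p(t)=t^{N}$ and to a comparison of coefficients is fine. The problem is that the identity you call the main obstacle is the entire content of Proposition~\ref{prop1}, and you only offer expectations for it. In addition, the one concrete structural claim you make is false.

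The classical term does contribute only at $k_1=k_2=1$, but it does not cancel the free coefficient there. By (\ref{km_identity}), $K_{n,s}$ involves only $M_{n,r}$ with $r\ge s$. So $a^{n_1}b^{n_2}m_{n_1}(\mu)m_{n_2}(\nu)$ arises in $m_N(a\mu\boxplus b\nu)$ only from $r=2$, $s=1$, with coefficient $\frac12\binom{N}{1}\binom{2}{1}=N$. Its classical coefficient is $\binom{N}{n_1}$. Hence the $(1,1)$ coefficient of the difference is $\binom{N}{n_1}-N$, which vanishes only if $\min(n_1,n_2)\le 1$. For $N=4$ this is the term $2m_2(\mu)m_2(\nu)$ of $2\Var(\mu)\Var(\nu)$. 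It matches the left-hand side: your $l$-sum at $n_\mu=n_\nu=0$, $k_1=k_2=1$ equals $3\cdot\frac16\cdot\frac16=\frac1{12}$, and multiplying by $N_{(4)}=24$ gives $2$. A base case set up with $c(1,1)=0$ would therefore fail.

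The deeper issue is the choice of basis. In the moment basis, $c(k_1,k_2)$ is the (\ref{km_identity})-transform of the cumulant coefficient $N!/(s!\,t!\,(N+1-s-t)!)$, which couples $s$ and $t$. One Chu--Vandermonde step reduces it to a single ${}_3F_2(1)$ that is not balanced in general. So Saalsch\"utz does not collapse it, and there is no reason to expect the closed form you anticipate. Likewise, the left-hand $l$-sum with weights $(l+k)!/(l-k+2)!$ is not of the shape covered by Lemma~\ref{identity_lemma}.

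The missing idea is to do the $l$-expansion in the \emph{cumulant} basis and pass to moments only afterwards. With $n_i'=n_i-k_i$, the coefficient of $K_{n_1,k_1}(\mu)K_{n_2,k_2}(\nu)$ in the difference is
\[
\frac{N!}{k_1!\,k_2!}\Bigl(\frac{1}{(n_1'+1)!\,(n_2'+1)!}-\frac{1}{(n_1'+n_2'+1)!}\Bigr).
\]
Lemma~\ref{identity_lemma} expands the second fraction as $\sum_{l\ge0}(2l+1)(-1)^l\frac{(n_1')_{(l)}(n_2')_{(l)}}{(n_1'+l+1)!\,(n_2'+l+1)!}$, and its $l=0$ term is exactly the classical one. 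Reindexing $l\to l+1$ gives the weight $(-1)^l(2l+3)$ and a manifestly separable product of factors $(n_i')_{(l+1)}/(n_i'+l+2)!$, i.e.\ $\sum_l(-1)^l(2l+3)J_l^\mu J_l^\nu$. Only then convert each single factor $J_l^\mu(n)$ to moments with (\ref{km_identity}). The resulting sum is a balanced ${}_3F_2(1)$, evaluated by (\ref{saalschutz}), and it equals $I_l^\mu(t\mapsto t^n)$.

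Your suggestion to expand the $l$-dependence of $\alpha_l(n,k)$ in functions of the form $(n')_{(l+1)}/(n'+l+2)!$ amounts to exactly this identity. The expansion coefficients must be those of (\ref{km_identity}), and proving that is the Saalsch\"utz computation. Carried through, it returns you to the cumulant-basis argument. So keep $c(k_1,k_2)$ as the (\ref{km_identity})-transform of the cumulant coefficients rather than seeking a closed form.
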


\begin{proof}
    Firstly, by Lemma \ref{cumulant_moment_identities} one has
    \begin{align*}
        m_{n}(\mu * \nu) &= m_{n}(\mu) + m_{n}(\nu) + \sum_{\substack{n_{\mu}, n_{\nu} \geq 1 \\ n_{\mu} + n_{\nu} = n}} \binom{n}{n_{\mu}} m_{n_{\mu}}(\mu) m_{n_{\nu}}(\nu) \\
        &= m_{n}(\mu) + m_{n}(\nu) \\
        &\quad+ \sum_{k_{\mu}, k_{\nu} \geq 1} \sum_{\substack{n_{\mu}, n_{\nu} \geq 1 \\ n_{\mu} + n_{\nu} = n}} \frac{1}{(1 + n_{\mu}) (1 + n_{\nu})} \binom{n}{n_{\mu}} \binom{n_{\mu} + 1}{k_{\mu}} \binom{n_{\nu} + 1}{k_{\nu}} K_{n_{\mu}, k_{\mu}}(\mu) K_{n_{\nu}, k_{\nu}}(\nu).
    \end{align*}
    On the free side, the additivity of cumulants~(\ref{cumulant_rule_2}), along with some combinatorics, justifies for $n \geq k \geq 1$ that
    \begin{align*}
        K_{n, k}(\mu \boxplus \nu) = K_{n, k}(\mu) + K_{n, k}(\nu) + \sum_{\substack{k_{\mu}, k_{\nu} \geq 1 \\ k_{\mu} + k_{\nu} = k}} \sum_{\substack{n_{\mu}, n_{\nu} \geq 1 \\ n_{\mu} + n_{\nu} = n}} \binom{k}{k_{\mu}} K_{n_{\mu}, k_{\mu}}(\mu) K_{n_{\nu}, k_{\nu}}(\nu),
    \end{align*}
    which then allows us to calculate
    \begin{align*}
        m_{n}(\mu \boxplus \nu) &= m_{n}(\mu) + m_{n}(\nu) + \sum_{k = 1}^{n} \sum_{\substack{k_{\mu}, k_{\nu} \geq 1 \\ k_{\mu} + k_{\nu} = k}} \sum_{\substack{n_{\mu}, n_{\nu} \geq 1 \\ n_{\mu} + n_{\nu} = n}} \frac{1}{k} \binom{n}{k - 1} \binom{k}{k_{\mu}} K_{n_{\mu}, k_{\mu}}(\mu) K_{n_{\nu}, k_{\nu}}(\nu) \\
        &= m_{n}(\mu) + m_{n}(\nu) + \frac{1}{n + 1}\sum_{k_{\mu}, k_{\nu} \geq 1} \sum_{\substack{n_{\mu}, n_{\nu} \geq 1 \\ n_{\mu} + n_{\nu} = n}} \binom{n + 1}{k_{\mu} + k_{\nu}} \binom{k_{\mu} + k_{\nu}}{k_{\mu}} K_{n_{\mu}, k_{\mu}}(\mu) K_{n_{\nu}, k_{\nu}}(\nu).
    \end{align*}

    Subtracting the two expressions yields
    \begin{align*}
        & m_{n}(\mu * \nu) - m_{n}(\mu \boxplus \nu) \nonumber = \sum_{\substack{n_{\mu}, n_{\nu} \geq 1 \\ n_{\mu} + n_{\nu} = n}} \sum_{k_{\mu}, k_{\nu} \geq 1} \frac{n!}{k_{\mu}! k_{\nu}!} \\
        & \left(\frac{1}{((n_{\mu} - k_{\mu}) + 1)! ((n_{\nu} - k_{\nu}) + 1)!} - \frac{1}{((n_{\mu} - k_{\mu}) + (n_{\nu} - k_{\nu}) + 1)!}\right) K_{n_{\mu}, k_{\mu}}(\mu) K_{n_{\nu}, k_{\nu}}(\nu).
    \end{align*}
    Note that the summands vanish if $\min(n_{\mu}, n_{\nu}) = 1$. By comparing the coefficient of $a^{n_\mu - 2} b^{n_\nu - 2}$ for $p(t) = t^{n}$ in~(\ref{ccm_identity_alt2}), we see that it is necessary and sufficient that
    \begin{align}
        &\widetilde{m}_{\mu, \nu}((t_{\mu}, t_{\nu}) \mapsto t_{\mu}^{n_{\mu} - 2} t_{\nu}^{n_{\nu} - 2}) = \sum_{k_{\mu}, k_{\nu} \geq 1} \frac{(n_{\mu} - 2)! (n_{\nu} - 2)!}{k_{\mu}! k_{\nu}!} \label{series_precursor}\\
        & \left(\frac{1}{((n_{\mu} - k_{\mu}) + 1)! ((n_{\nu} - k_{\nu}) + 1)!} - \frac{1}{((n_{\mu} - k_{\mu}) + (n_{\nu} - k_{\nu}) + 1)!}\right) K_{n_{\mu}, k_{\mu}}(\mu) K_{n_{\nu}, k_{\nu}}(\nu).\nonumber
    \end{align}
    If one writes
    \begin{align*}
        J_{l}^\mu(n) = \sum_{k \geq 1} \frac{(n + 2 - k)!}{(n - k - l + 1)! (n - k + l + 4)!} \frac{n!}{k!}K_{n + 2, k}(\mu),
    \end{align*}
    then the combinatorics of Lemma~\ref{identity_lemma} imply that (\ref{series_precursor}) is equivalent to
    \begin{align*}
        \widetilde{m}_{\mu, \nu}((t_{\mu}, t_{\nu}) \mapsto t_{\mu}^{n_{\mu}} t_{\nu}^{n_{\nu}}) = \sum_{l = 0}^{\infty} (-1)^{l} (2 l + 3) J_{l}^\mu(n_{\mu}) J_{l}^\nu(n_{\nu}).
    \end{align*}
    It remains to verify that $J_{l}^\mu(n) = I_l^\mu\left(t \mapsto t^{n}\right)$, cf.~(\ref{basic_ccm_series}). Indeed,
    \begin{align*}
        J_{l}^\mu(n) &= \sum_{k \geq 1} \frac{(n + 2 - k)!}{(n - k - l + 1)! (n - k + l + 4)!} \frac{n!}{k!} \sum_{r = k}^{n} (-1)^{k - r} \frac{k}{r}\binom{n + r - k + 1}{r - k} M_{n + 2, r}(\mu)\\
        &= \frac{n!}{(n - l)! (n + l + 3)!}\sum_{r \geq 1} \frac{(-1)^{r - 1} (n + r)!}{r!} \pFq{3}{2}{l - n, -n - l - 3, 1 - r}{-n - 1, -n - r}{1} M_{n + 2, r}(\mu) \\
        &= \frac{n!}{(n - l)! (n + l + 3)!}\sum_{r \geq 1} \frac{(-1)^{r - 1}}{r!} \frac{(l + r)! (n - r + 2)!}{(l - r + 2)!} M_{n + 2, r}(\mu) = I_l^\mu\left(t \mapsto t^{n}\right),
    \end{align*}
    where we use Lemma \ref{cumulant_moment_identities} and Saalsch{\"u}tz's theorem~(\ref{saalschutz}) in the first and third equalities respectively.
\end{proof}

\subsection{Step 2}

\begin{prop}\label{prop2}
    Let $\mu$ be a probability measure supported on $d$ points, $A \in M_{d}(\C)$ be Hermitian, and $v \in \C^{d}$, such that $m_{k}(\mu) = \langle A^{k} v, v\rangle$ for any $k \in \mathbb{N}$. Then, for any $k, l \in \mathbb{N}$,
    \begin{align}\label{I_functional}
        I_l^\mu\left(t \mapsto t^{k}\right) = \int a^{k} C_{l}^{3/2}(2 b - 1) \omega_{A, v v^{*}}(a, b) \d{a} \d{b},
    \end{align}
    where $\omega_{A, v v^{*}}$ is as in Lemma~\ref{weird_moment_lemma}, and $C_l^{3/2}$ is a Gegenbauer polynomial as in section~\ref{hypergeometric}.
\end{prop}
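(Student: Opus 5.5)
Both sides of (\ref{I_functional}) are linear in the polynomial $b\mapsto C_l^{3/2}(2b-1)$, so the plan is to expand this polynomial in powers of $b$, apply Lemma~\ref{weird_moment_lemma} to each monomial $a^k b^j$, and then reduce the resulting traces to moments of $\mu$.

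\textbf{Expanding the Gegenbauer polynomial.} By (\ref{gegenbauer_hypergeometric}), with $x=2b-1$ we have $(1-x)/2=1-b$. A Pfaff/Euler transformation (reflection $z\mapsto 1-z$ for a terminating ${}_2F_1$) gives
\begin{align*}
C_l^{3/2}(2b-1)=\sum_{j=0}^{l} c_{l,j}\, b^{j},
\end{align*}
with explicit coefficients $c_{l,j}=(-1)^{l-j}\frac{(l+j+2)!}{(l-j)!\,j!\,(j+1)!\,(l+1)}$, up to a normalization I will check at $b=1$. The left-hand side then becomes $\sum_j c_{l,j}\int a^k b^j\,\omega_{A,vv^*}\,\d a\,\d b$.

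\textbf{Collapsing the trace words.} Apply Lemma~\ref{weird_moment_lemma} with $B=vv^*$, which is rank one. Each word then factors as
\begin{align*}
\tr(A^{i_1}B^{j_1}\cdots A^{i_n}B^{j_n})=\|v\|^{2\sum (j_r-1)}\prod_{r} \langle A^{i_{r+1}}v,v\rangle .
\end{align*}
Since $\mu$ is a probability measure, $\|v\|^2=m_0(\mu)=1$. Each word is therefore $m_{i_1}\cdots m_{i_n}$. Summing over $i$'s gives $M_{k+2,n}(\mu)$, and the $j$'s contribute $\binom{j+1}{n-1}$ compositions of $j+2$ into $n$ positive parts. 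Hence
\begin{align*}
\int a^k b^j\omega\,\d a\,\d b=\frac{1}{(k+j+2)(k+j+3)}\sum_{n\ge1}\frac{(-1)^{n-1}\binom{j+1}{n-1}}{\binom{k+j+1}{n-1}}M_{k+2,n}(\mu).
\end{align*}
The remaining hypotheses (finite support, Hermitian $A$) serve only to make Lemma~\ref{weird_moment_lemma} applicable.

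\textbf{Matching coefficients.} Multiply by $c_{l,j}$ and sum over $j$. For each fixed $n$, the coefficient of $M_{k+2,n}(\mu)$ is a single sum over $j$ that must equal
\begin{align*}
\frac{k!}{(k+l+3)!(k-l)!}\frac{(-1)^{n-1}(k-n+2)!}{n!}\frac{(l+n)!}{(l-n+2)!},
\end{align*}
as read off from (\ref{I_first_definition}). Writing $\frac{1}{(k+j+2)(k+j+3)}$ and $1/\binom{k+j+1}{n-1}$ via factorials, the summand is hypergeometric in $j$. After reindexing $j\to l-j$ the sum should become a terminating, balanced ${}_3F_2$ at $1$ with upper parameter $-l$, which Saalsch\"utz's theorem (\ref{saalschutz}) evaluates.

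\textbf{Main obstacle.} I expect the hard part to be this last hypergeometric evaluation: getting the parameters right, checking the Saalsch\"utz balance condition, and handling the boundary cases. In particular, $k<l$ should give $0$, consistent with $I_l^\mu$ vanishing on low-degree polynomials, and this should follow from orthogonality (\ref{gegenbauer_orthogonality}) or show up as a vanishing Pochhammer factor. Should the sum over $j$ not appear balanced, I would first try a contiguous relation, or split $\frac{1}{(m+2)(m+3)}=\frac{1}{m+2}-\frac{1}{m+3}$, turning it into two ${}_2F_1$ sums handled by Chu--Vandermonde (\ref{gauss_summation}).
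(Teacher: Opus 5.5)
Your plan is essentially the paper's proof: expand $C_l^{3/2}(2b-1)$ in powers of $b$, specialize Lemma~\ref{weird_moment_lemma} to the rank-one $B=vv^*$ (with $\|v\|^2=m_0(\mu)=1$) to get the same formula for $\int a^k b^j\omega_{A,vv^*}$ in terms of $M_{k+2,n}(\mu)$, and after $j\to l-j$ evaluate by Saalsch{\"u}tz the balanced ${}_3F_2$ with upper parameters $-k-l-3,\,n-l-2,\,-l$ and lower parameters $n-k-l-2,\,-2l-2$; the case $k<l$ vanishes through the factor $(-k)^{(l)}$. One correction: the coefficients should be $c_{l,j}=(-1)^{l-j}\frac{(l+j+2)!}{(l-j)!\,j!\,(j+1)!\,(l+1)(l+2)}$, since yours are too large by a factor $l+2$, and your proposed normalization check at $b=1$ would catch this.
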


\begin{proof}
    Since $\tr\left(A^{i_{1}} (v v^{*})^{j_{1}} \cdots A^{i_{n}} (v v^{*})^{j_{n}}\right) = m_{i_{1}}(\mu) m_{i_{2}}(\mu) \ldots, m_{i_{n}}(\mu)$, Lemma~\ref{weird_moment_lemma}, along with some combinatorics, implies that
    \begin{align*}
        \int a^{k} b^{l} \omega_{A, v v^{*}}(a, b) \d{a} \d{b} = \frac{1}{(k + l + 2) (k + l + 3)} \sum_{n = 1}^{k + 2} \frac{(-1)^{n - 1} \binom{l + 1}{n - 1}}{\binom{k + l + 1}{n - 1}} M_{k + 2, n}(\mu).
    \end{align*}
    By the series expansion for Gegenbauer polynomials~(\ref{gegenbauer_hypergeometric}), we get
    \begin{align*}
        & \int a^{k} C_{l}^{3/2}(2 b - 1) \omega_{A, v v^{*}}(a, b) \d{a} \d{b} \\
        &\quad=  \sum_{n = 1}^{k + 2}\sum_{l' = 0}^{l}\frac{1}{(k + l' + 2) (k + l' + 3)}\frac{(-1)^{n - 1} \binom{l' + 1}{n - 1}}{\binom{k + l' + 1}{n - 1}} \frac{(l + 3)^{(l')} (-l)^{(l')}}{2^{(l')} l'!} M_{k + 2, n}(\mu) \\
        &\quad= \sum_{n = 1}^{k + 2}\frac{1}{(l + 1)(l + 2)} \sum_{l' = 0}^{l} (-1)^{l' + n - 1}\frac{(k + l - n - l' + 2)!}{(k + l - l' + 3)! (l - n - l' + 2)!}\frac{(2 l - l' + 2)!}{l'! (l - l')!} M_{k + 2, n}(\mu) \\
        &\quad= \sum_{n = 1}^{k + 2}\frac{(-1)^{n- 1}}{(l + 1)(l + 2)} \frac{(k + l - n + 2)! (2 l + 2)!}{(k + l + 3)! (l - n + 2)! l!} \pFq{3}{2}{-k - l - 3, -l + n - 2, -l}{-k -l + n - 2, -2 l - 2}{1} M_{k + 2, n}(\mu)\\
        &\quad= \sum_{n = 1}^{k + 2}\frac{k!}{(k + l + 3)! (k - l)!}\frac{(-1)^{n - 1} (k - n + 2)!}{n!} \frac{(l + n)!}{(l - n + 2)!}  M_{k + 2, n}(\mu) = I_l^\mu\left(t \mapsto t^{k}\right),
    \end{align*}
    where we used Saalsch{\"u}tz's theorem~(\ref{saalschutz}) in the penultimate equality.
\end{proof}

For a finitely-supported probability measure $\mu$, we write $\omega_{\mu} = \omega_{A, v v^{*}}$, with $A$ and $v$ as in Proposition~\ref{prop2}.

\begin{prop}\label{pointwise_bound}
    Let $A \in M_{d}(\C)$ be Hermitian, $v \in \C^{d}$ with $\|v\|_{2} = 1$, and $a, b \in \R$. Then, the matrix
    \begin{align}\label{lol3}
        (A - a I) (v v^{*} - b I)
    \end{align}
    has at most one conjugate pair of non-real eigenvalues, and their imaginary parts are bounded by $\length(\conv(\lambda(A)))$. Consequently,
    \begin{align}\label{lol5}
        \omega_{\mu}(a, b) \leq \frac{1}{\pi}\length(\conv(\supp(\mu))),
    \end{align}
    for any finitely-supported probability measure $\mu$.
\end{prop}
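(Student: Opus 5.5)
Write $P = vv^{*}$ and $M = (A - aI)(P - bI)$. The matrix $P - bI$ acts as $-b$ on $v^{\perp}$ and as $1 - b$ on $\operatorname{span}(v)$, so $P - bI = -bI + P$ is a rank-one perturbation of a scalar. Hence $M = -b(A - aI) + (A - aI)vv^{*}$ is a rank-one perturbation of the Hermitian matrix $H := -b(A - aI)$. The plan is to reduce the spectral question to the restriction of $M$ to a suitable subspace and then use this rank-one structure.

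\textbf{Step 1 (reduction to a two-dimensional problem).} I would first dispose of the degenerate cases. If $b = 0$, then $M = (A - aI)vv^{*}$ has rank one, so all its eigenvalues are real: one is $\langle (A - aI)v, v\rangle$ and the rest are zero. If $b = 1$, the same argument applies to $-(A - aI)(I - P)$, whose nonzero spectrum agrees with that of $(I - P)(A - aI)(I - P)$, which is Hermitian.

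In general, $M$ is similar to $S^{1/2}(A - aI)S^{1/2}$ when $S := P - bI$ is positive definite (that is, $b < 0$), which makes the spectrum real. When $b > 1$, $S$ is negative definite, and the same similarity argument with $-S$ again gives a real spectrum. So the interesting range is $0 < b < 1$, where $S$ has inertia $(1, d - 1)$, i.e.\ it is indefinite with exactly one positive eigenvalue.

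\textbf{Step 2 (at most one conjugate pair).} For $0 < b < 1$, $M = (A - aI)S$ is self-adjoint with respect to the indefinite form $[x, y] := \langle Sx, y\rangle$. Indeed, $[Mx, y] = \langle S(A - aI)Sx, y\rangle = [x, My]$. By Pontryagin space theory, an operator that is self-adjoint in a space with one positive square has at most one pair of non-real eigenvalues, counted with multiplicity. A direct proof would go as follows. Any eigenvector $x$ with a non-real eigenvalue is $S$-neutral, i.e.\ $\langle Sx, x\rangle = 0$. The span of the eigenvectors for non-real eigenvalues in the upper half-plane is then an $S$-neutral subspace. A neutral subspace has dimension at most $\min(1, d - 1) = 1$. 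This gives at most one pair $\lambda, \bar{\lambda}$.

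\textbf{Step 3 (bounding the imaginary part).} For an eigenvector $x$ with $Mx = \lambda x$, put $y = Sx$. Then $\lambda \langle x, y\rangle = \langle (A - aI)y, y\rangle$, which is real. I intend to obtain the bound by taking trace-type quantities. The eigenvalues of $M$ coincide with those of $(A - aI)^{1/2}$-type symmetrizations, but $A - aI$ need not be definite, so that route fails. Instead I would use the rank-one structure: the non-real eigenvalues $\lambda$ of $M$ are the roots of the secular equation
\begin{align*}
1 + \langle (H - \lambda)^{-1}(A - aI)v, v\rangle = 0,
\end{align*}
and $H = -b(A - aI)$. Writing everything in the eigenbasis of $A$, with weights $|v_i|^2$ (summing to $1$) and eigenvalues $\alpha_i - a$, this becomes the scalar equation
\begin{align*}
\sum_i |v_i|^2 \frac{\alpha_i - a}{(1 - b)(\alpha_i - a) - \ldots} = \ldots,
\end{align*}
to be made precise. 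Equivalently, one can compare $\lambda$ with $\mathbb{E}$ over the measure $\mu$. Taking imaginary parts should give $|\Im \lambda| \le \sqrt{b(1 - b)}\,(\max_i \alpha_i - \min_i \alpha_i) \le \length(\conv(\lambda(A)))$. I expect this to be the \textbf{main obstacle}. My first attempt would be to write $\lambda$ as an $S$-Rayleigh-type quotient: restrict $M$ to the invariant two-dimensional space spanned by $x$ and $\bar{x}$, and estimate using the fact that a $2\times2$ matrix product of a Hermitian matrix and a signature matrix has imaginary part bounded by its spread.

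\textbf{Step 4 (the bound on $\omega_\mu$).} With one conjugate pair and $|\Im \lambda| \le L$, where $L = \length(\conv(\lambda(A)))$, the sum in the definition of $\omega_{A, vv^{*}}$ is at most $2L$. Therefore $\omega_\mu \le 2L/(2\pi) = L/\pi$. Since $\lambda(A) = \supp(\mu)$ for the representation in Proposition~\ref{prop2}, this yields the bound~(\ref{lol5}).
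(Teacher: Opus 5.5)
Your Steps 1, 2 and 4 are sound. Step 2 takes a genuinely different route from the paper. The paper rewrites the eigenvalue condition, via the rank-one determinant formula, as $\frac{1-b}{w-a}=\langle (wI-A)^{-1}v,v\rangle$ with $w=a-z/b$, and gets $d-2$ real roots from sign changes of this rational function between consecutive eigenvalues of $A$. Your Pontryagin-space argument ($M$ is self-adjoint for $\langle S\cdot,\cdot\rangle$, which has one positive square) avoids that interlacing bookkeeping. One caveat: $\omega$ counts eigenvalues with algebraic multiplicity, so run the neutrality argument on root subspaces, not only on eigenvectors. Root subspaces are also $\langle S\cdot,\cdot\rangle$-orthogonal whenever $\lambda\neq\bar\mu$, so the argument goes through.

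The genuine gap is Step 3, and it is exactly the half of the statement that (\ref{lol5}) depends on. You leave the secular equation unfinished and assert the sharper bound $\sqrt{b(1-b)}\,L$ without argument. The fallback also does not work as described, for two reasons. First, $A$ is complex Hermitian, so $M$ is not a real matrix and $\bar{x}$ is in general not an eigenvector. Second, even on the correct invariant plane $\ker(M-\lambda)+\ker(M-\bar\lambda)$, writing the restriction as a signature matrix times a Hermitian $2\times 2$ matrix requires a basis orthonormal for the indefinite form, not for the standard inner product. Without further work, nothing controls the spread of that Hermitian factor by $\length(\conv(\lambda(A)))$.

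The fix is one line from your own secular equation. In the eigenbasis of $A$ it reads $\sum_i |v_i|^2(\alpha_i-a)/(\lambda+b(\alpha_i-a))=1$. Take absolute values instead of imaginary parts: since $|\lambda+b(\alpha_i-a)|\ge|\Im\lambda|$, you get $|\Im\lambda|\le\max_i|\alpha_i-a|$. Your Step 1 similarity trick shows that non-real eigenvalues force $a\in\conv(\lambda(A))$, since otherwise $A-aI$ is definite; hence this maximum is at most $L$. This is the paper's argument, phrased there as $\|(wI-A)^{-1}(A-aI)\|<b$ whenever $|\Im w|>L/b$.

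Alternatively, for a unit eigenvector $x$ one has $\Im\lambda=\Im\bigl(\langle x,v\rangle\langle (A-cI)v,x\rangle\bigr)$ for every real $c$, which gives $|\Im\lambda|\le L/2$. Combined with your neutrality relation $|\langle x,v\rangle|^2=b$, the same identity even yields $|\Im\lambda|\le\tfrac12\sqrt{b(1-b)}\,L$.
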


\begin{proof}
    The rank-$1$ update formula for determinants \cite[0.8.5.11]{MR2978290} implies that the eigenvalues $z$ of (\ref{lol3}) satisfy
    \begin{align}\label{lol4}
        0 = \frac{1 - b}{(a - \frac{z}{b}) - a} - \left\langle \left(\left(a - \frac{z}{b}\right) I - A\right)^{-1} v, v\right\rangle = \frac{1 - b}{w - a} -  \left\langle (w I - A)^{-1} v, v \right\rangle,
    \end{align}
    where $w = a - z/b$.
    Observe that (\ref{lol4}) is a rational function of degree at most $d + 1$ in $w$, and has at most $d$ finite roots. On the interval between two consecutive eigenvalues of $A$, the rational function is unbounded from above and below; so, it has a real root on the interval unless $a$ is on the interval. Hence, at least $d - 2$ of the roots are real. This implies the claim on the upper bound for the non-real roots. We also conclude that there are no non-real roots if $a \notin \conv(\lambda(A))$.

    For the bound on the imaginary parts, (\ref{lol4}) can be rewritten as
    \begin{align*}
        -b = \left\langle (w I - A)^{-1} (A - a I) v, v \right\rangle.
    \end{align*}
	If $|\Im(w)| > \length(\conv(\lambda(A)))/b$,
    the operator inside the quadratic form has operator norm less than $b$, and hence $w$ is not a solution to (\ref{lol4}). But $|\Im(z)| = b |\Im(w)|$, which gives the claim.
\end{proof}

\subsection{Step 3}

\begin{prop}\label{prop3}
    Let $\mu$ and $\nu$ be finitely-supported probability measures.

    Consider the function
    \begin{align}
        w_{\mu, \nu}(t_{\mu}, t_{\nu}) = \int_{0}^{1} \int_{0}^{1} \min\left(\frac{1}{(1 - b_{\mu}) (1 - b_{\nu})} , \frac{1}{b_{\mu} b_{\nu}}\right) \omega_{\mu}(t_{\mu}, b_{\mu}) \omega_{\nu}(t_{\nu}, b_{\nu}) \d{b_{\mu}} \d{b_{\nu}}.
    \end{align}
    Then $w_{\mu, \nu}$ is non-negative, continuous, and supported on $\conv(\supp(\mu)) \times \conv(\supp(\nu))$, and for any $n_{\nu}, n_{\mu} \in \mathbb{N}$, we have
    \begin{align}\label{ccm_formula}
        \widetilde{m}_{\mu, \nu}((t_{\mu}, t_{\nu}) \mapsto t_{\mu}^{n_{\mu}} t_{\nu}^{n_{\nu}}) = \int \int t_{\mu}^{n_{\mu}} t_{\nu}^{n_{\nu}} w_{\mu, \nu}(t_{\mu}, t_{\nu})\d{t_{\mu}} \d{t_{\nu}}.
    \end{align}
\end{prop}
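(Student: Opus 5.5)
The plan is to combine Proposition~\ref{prop2} with the series definition~(\ref{basic_ccm_series}), and then sum the resulting Gegenbauer series in closed form. By Proposition~\ref{prop2}, for finitely supported $\mu,\nu$,
\begin{align*}
    \widetilde{m}_{\mu, \nu}((t_{\mu}, t_{\nu}) \mapsto t_{\mu}^{n_{\mu}} t_{\nu}^{n_{\nu}}) = \sum_{l = 0}^{\infty} (-1)^{l}(2l+3) \int\!\!\int t_{\mu}^{n_{\mu}} t_{\nu}^{n_{\nu}} C_{l}^{3/2}(2b_{\mu}-1) C_{l}^{3/2}(2b_{\nu}-1)\, \omega_{\mu}(t_\mu,b_\mu)\omega_{\nu}(t_\nu,b_\nu)\,\d{t_\mu}\d{b_\mu}\d{t_\nu}\d{b_\nu}.
\end{align*}
The sum is actually finite, since $I_l^\mu$ kills $t^{n}$ for $l>n$. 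Using $C_l^{3/2}(-x)=(-1)^l C_l^{3/2}(x)$, the factor $(-1)^l C_l^{3/2}(2b_\mu-1)$ equals $C_l^{3/2}(1-2b_\mu)$.

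The heart of the proof is to identify the kernel
\begin{align*}
    K(b_\mu,b_\nu)=\sum_{l\ge 0}(2l+3)\,C_l^{3/2}(1-2b_\mu)\,C_l^{3/2}(2b_\nu-1)
\end{align*}
with $\min\bigl(\frac{1}{(1-b_\mu)(1-b_\nu)},\frac{1}{b_\mu b_\nu}\bigr)$, at least as a distribution tested against polynomials in $b_\mu,b_\nu$ on $[0,1]^2$. By the orthogonality relation~(\ref{gegenbauer_orthogonality}), after the change of variables $x=2b-1$ (weight $1-x^2=4b(1-b)$), this is equivalent to the expansion of the function $g(b_\mu,b_\nu)=\min(\cdots)/(b_\mu(1-b_\mu)b_\nu(1-b_\nu))$, or more conveniently to checking the moment identities. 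Concretely, I would verify that for each $l$,
\begin{align*}
    \int_0^1\!\!\int_0^1 \min\Bigl(\tfrac{1}{(1-b_\mu)(1-b_\nu)},\tfrac{1}{b_\mu b_\nu}\Bigr) C_{l}^{3/2}(2b_\mu-1)\, q(b_\nu)\,\d{b_\mu}\d{b_\nu}
\end{align*}
reproduces the coefficient $(2l+3)(-1)^l\int C_l^{3/2}(2b-1)q(b)\,\d{b}$ paired appropriately. Equivalently, one can use a reproducing-kernel argument: $\min$ splits along the antidiagonal $b_\mu+b_\nu=1$, and on each piece the kernel factorizes. Integrating $C_l^{3/2}(2b_\mu-1)$ against $1/b_\mu$ over $[1-b_\nu,1]$ can be computed via the generating function~(\ref{gegenbauer_series}) or via~(\ref{gegenbauer_hypergeometric}), reducing to hypergeometric sums of the type handled by Chu--Vandermonde and Saalsch\"utz. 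If this direct route is messy, the fallback is to test both sides on monomials $b_\mu^{p}b_\nu^{q}$ weighted by the moments of $\omega$ from Lemma~\ref{weird_moment_lemma}, i.e.\ to check that the double series in~(\ref{basic_ccm_series}) equals the explicit integral, using Lemma~\ref{identity_lemma} (which is exactly an identity of the shape $\sum(2l+1)(-1)^l(\cdot)(\cdot)=\cdot$) after shifting indices. This kernel identification is the main obstacle. One subtlety is that $\min(\cdot)$ is singular at the corners $(0,0)$ and $(1,1)$, so the interchange of sum and integral must be justified.

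That justification comes from the support properties of $\omega$. From Proposition~\ref{pointwise_bound} and Lemma~\ref{weird_moment_lemma}, $\omega_\mu$ is bounded, continuous, and supported in $\conv(\supp\mu)\times[0,1]$. Moreover, near $b=0$ and $b=1$ the matrix $(A-aI)(vv^*-bI)$ becomes close to $-b(A-aI)$ or to a product with a rank-one perturbation, so the imaginary parts vanish linearly. I would show $\omega_\mu(t,b)=O(\min(b,1-b))$, using the bound $|\Im z|\le b\cdot\length$ from the proof of Proposition~\ref{pointwise_bound} together with the symmetric version at $b=1$. This makes $\min(\cdot)\,\omega_\mu\omega_\nu$ integrable, which gives continuity of $w_{\mu,\nu}$ by dominated convergence. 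Non-negativity is then immediate since all factors are non-negative, and the support claim follows from that of $\omega$.
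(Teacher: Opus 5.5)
Your reduction matches the paper's. Proposition~\ref{prop2}, together with $C_l^{3/2}(-x)=(-1)^lC_l^{3/2}(x)$, turns (\ref{ccm_formula}) into the claim that $g(b_\mu,b_\nu)=\min\bigl(\frac{1}{(1-b_\mu)(1-b_\nu)},\frac{1}{b_\mu b_\nu}\bigr)$ has the expansion $\sum_l(-1)^l(2l+3)C_l^{3/2}(2b_\mu-1)C_l^{3/2}(2b_\nu-1)$. The gap is that this expansion is the substance of the proposition, and you leave it unproved. The check you call concrete is also misformulated:
\begin{itemize}
\item The $C_l^{3/2}(2b-1)$ are orthogonal only for the weight $b(1-b)$; for example, $\int_0^1 C_0^{3/2}C_2^{3/2}(2b-1)\,\mathrm{d}b=1/6$. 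So integrating $g$ against $C_l^{3/2}(2b_\mu-1)q(b_\nu)$ without that weight does not isolate the $l$th coefficient.
\item The correct object is the expansion of $g$ itself in $L_2([0,1]^2,h)$ with $h=b_\mu(1-b_\mu)b_\nu(1-b_\nu)$. This is legitimate because $g^2h\le 1$. It is not an expansion of $g/h$, which is not even in $L_2(h)$, since $g\to1$ at $(0,0)$.
\end{itemize}
The paper computes the coefficients with the generating function (\ref{gegenbauer_series}). In the coordinates $x=2b-1$, pair $g$ in $x_\nu$ against $(1-2x_\nu t+t^2)^{-3/2}(1-x_\nu^2)$; by (\ref{gegenbauer_orthogonality}) this extracts $\sum c_{k_\mu,k_\nu}\frac{4t^{k_\nu}}{2k_\nu+3}C^{3/2}_{k_\mu}(x_\mu)$. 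Splitting at $x_\nu=-x_\mu$, where the minimum switches branches, the integral is elementary and equals $8\bigl(\sqrt{1+2tx_\mu+t^2}-1-tx_\mu\bigr)/\bigl(t^2(1-x_\mu^2)\bigr)$. Multiplying by $t^2$ and differentiating twice gives $8(1+2tx_\mu+t^2)^{-3/2}=4\sum_k(k+1)(k+2)(-1)^kC_k^{3/2}(x_\mu)t^k$, hence $c_{k_\mu,k_\nu}=\delta_{k_\mu,k_\nu}(-1)^{k_\mu}(2k_\mu+3)$. Your listed alternatives (Saalsch\"utz-type sums, or Lemma~\ref{identity_lemma}, which belongs to Proposition~\ref{prop1}) are not developed far enough to yield this.

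The analytic part rests on a false estimate. The bound $\omega_\mu(t,b)=O(\min(b,1-b))$ fails. Take $\mu=\frac12(\delta_0+\delta_1)$, $A=\mathrm{diag}(0,1)$, $v=(1,1)/\sqrt2$. Then $(A-\frac12 I)(vv^*-bI)$ has trace $0$ and determinant $b(1-b)/4$, so $\omega_\mu(\frac12,b)=\frac{1}{2\pi}\sqrt{b(1-b)}$. The proof of Proposition~\ref{pointwise_bound} gives $|\Im w|\le \length/b$, that is $|\Im z|\le\length$, not $|\Im z|\le b\cdot\length$. Also, $g\to1$ at $(0,0)$ and $(1,1)$; its singularities are at $(0,1)$ and $(1,0)$.

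None of this affects non-negativity, continuity and support. As in the paper, $g\le h^{-1/2}$ is integrable and $\omega_\mu,\omega_\nu$ are bounded and continuous, so dominated convergence suffices. However, integrability of $g\,\omega_\mu\omega_\nu$ does not justify integrating an only $L_2(h)$-convergent series termwise. What you need is control of $F_\mu(b)=\int t^{n_\mu}\omega_\mu(t,b)\,\mathrm{d}t$, not of $\omega_\mu$ pointwise. A partial-fraction look at the moment formula in the proof of Proposition~\ref{prop2} shows $F_\mu(b)=b(1-b)P_\mu(b)$ with $\deg P_\mu\le n_\mu$; for instance, $F_\mu(b)=\Var(\mu)\,b(1-b)$ when $n_\mu=0$. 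Hence $\int\!\!\int g\,F_\mu F_\nu=\langle g,P_\mu\otimes P_\nu\rangle_{L_2(h)}$, and orthogonality gives the finite sum.
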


\begin{proof}
    Let
    \begin{align*}
        g(b_{\mu}, b_{\nu}) = \min\left(\frac{1}{(1 - b_{\mu}) (1 - b_{\nu})} , \frac{1}{b_{\mu} b_{\nu}}\right).
    \end{align*}
    
    All but the last property of $w_{\mu, \nu}$ in the statement follow from analogous properties of  $\omega_{\mu}$ and $\omega_{\nu}$, and the observation that
    \begin{align}\label{finite_integral1}
        \int_{0}^{1} \int_{0}^{1}g(b_{\mu}, b_{\nu}) \d{b_{\mu}} \d{b_{\nu}} \leq \left( \int_{0}^{1} \frac{1}{\sqrt{b (1 - b)}} \d{b} \right)^{2}< \infty.
    \end{align}

    Our idea is to show that in a suitable $L_2$-space,
    \begin{align}\label{g_series}
        g(b_{\mu}, b_{\nu}) = \sum_{k_{\mu}, k_{\nu}} c_{k_{\mu}, k_{\nu}} C_{k_{\mu}}^{3/2}(2 b_{\mu} - 1) C_{k_{\nu}}^{3/2}(2 b_{\nu} - 1),
    \end{align}
    and then compute the coefficients $c_{k_{\mu}, k_{\nu}}$.

    We consider the Hilbert space $L_{2}([0, 1]^{2}, h)$, where $h(b_{\mu}, b_{\nu}) := b_{\mu} (1 - b_{\mu}) b_{\nu}(1 - b_{\nu})$. Equation~(\ref{gegenbauer_orthogonality}) says that the products of the Gegenbauer polynomials $C_{k_{\mu}}^{3/2}(2 b_{\mu} - 1) C_{k_{\nu}}^{3/2}(2 b_{\nu} - 1)$ form an orthogonal basis of $L_{2}(h)$. It remains to check that $g \in L_{2}(h)$; this follows from a pointwise comparison to (\ref{finite_integral1}).
    
    We can compute the right-hand side of (\ref{ccm_formula}),
    \begin{align}\label{lol6}
        & \int \int t_{\mu}^{n_{\mu}} t_{\nu}^{n_{\nu}} w_{\mu, \nu}(t_{\mu}, t_{\nu})\d{t_{\mu}} \d{t_{\nu}} \\
        &\qquad= \int_{0}^{1} \int_{0}^{1} g(b_\mu, b_\nu) \left(\int t_{\mu}^{n_{\mu}} \omega_{\mu}(t_{\mu}, b_{\mu}) \d{t_{\mu}} \right) \left( \int t_{\nu}^{n_{\nu}}\omega_{\nu}(t_{\nu}, b_{\nu}) \d{t_{\nu}}\right) \d{b_{\mu}} \d{b_{\nu}}. \nonumber
    \end{align}
    Orthogonality of the Gegenbauer polynomials and Proposition~\ref{prop2} hence imply that (\ref{lol6}) equals the finite sum
    \begin{align*}
        &\sum_{k_{\mu}, k_{\nu}} c_{k_{\mu}, k_{\nu}}\left(\int_{0}^{1}\int t_{\mu}^{n_{\mu}} C_{k_{\mu}}^{3/2}(2 b_{\mu} - 1)\omega_{\mu}(t_{\mu}, b_{\mu}) \d{t_{\mu}} \d{b_{\mu}}\right) \left(\int_{0}^{1}\int t_{\nu}^{n_{\nu}} C_{k_{\nu}}^{3/2}(2 b_{\nu} - 1)\omega_{\nu}(t_{\nu}, b_{\nu}) \d{t_{\nu}} \d{b_{\nu}}\right) \\
        &\qquad\qquad= \sum_{k_{\mu}, k_{\nu}} c_{k_{\mu}, k_{\nu}} I_{k_{\mu}}^{\mu}(t_{\mu} \mapsto t_{\mu}^{n_{\mu}}) I_{k_{\nu}}^{\nu}(t_{\nu} \mapsto t_{\nu}^{n_{\nu}}).
    \end{align*}
    In light of the definition of $\widetilde{m}_{\mu,\nu}$ (\ref{basic_ccm_series}), it remains to check that
    \begin{align}\label{g_coeffs}
        c_{k_{\mu}, k_{\nu}} = \delta_{k_{\mu}, k_{\nu}} (2 k_{\mu} + 3) (-1)^{k_{\mu}}.
    \end{align}
    
    For any $|t_{\mu}| < 1$, using the generating function of the Gegenbauer polynomials (\ref{gegenbauer_series}) and orthogonality (\ref{gegenbauer_orthogonality}), we have
    \begin{align*}
         &\sum_{k_{\mu}, k_{\nu}} c_{k_{\mu}, k_{\nu}}\frac{4 t_{\nu}^{k_{\nu}}}{2 k_{\nu} + 3} C^{3/2}_{k_{\mu}}(x_{\mu}) = \int_{-1}^{1} \sum_{k_{\mu}, k_{\nu}} c_{k_{\mu}, k_{\nu}} C_{k_{\mu}}^{3/2}(x_{\mu}) C_{k_{\nu}}^{3/2}(x_{\nu}) (1 - 2 x_{\mu} t_{\mu} + t_{\mu}^{2})^{-3/2} (1 - x_{\nu}^{2})\d{x_{\nu}} \\
        &\qquad\qquad= \int_{-1}^{1} 4 \min\left(\frac{1}{(1 + x_{\mu}) (1 + x_{\nu})}, \frac{1}{(1 - x_{\mu}) (1 - x_{\nu})}\right)(1 - 2 x_{\mu} t_{\mu} + t_{\mu}^{2})^{-3/2} (1 - x_{\nu}^{2})\d{x_{\nu}}.
    \end{align*}
    The last integral can be evaluated using the  antiderivative
    \begin{align*}
        \int \frac{a + b x}{(1 - \alpha x)^{3/2}} = \frac{2 (b + a \alpha + b (1 - \alpha x))}{\alpha^{2} (1 - \alpha x)^{1/2}}.
    \end{align*}
    With this, we obtain
    \begin{align*}
        \sum_{k_{\mu}, k_{\nu}} c_{k_{\mu}, k_{\nu}}\frac{4 t_{\nu}^{k_{\nu}}}{(2 k_{\nu} + 3)} C_{k_{\mu}}^{3/2}(x_{\mu}) = \frac{8 ((1 + 2 t_{\nu} x_{\mu} + t_{\nu}^{2})^{1/2} - (1 + 2 t_{\nu} x_{\mu}))}{t_{\nu}^2 (1 - x_{\mu}^2)}.
    \end{align*}
    Multiplying both sides by $t_{\nu}^{2}$, and differentiating twice in $t_{\nu}$, yields
    \begin{align*}
        \sum_{k_{\mu}, k_{\nu}} c_{k_{\mu}, k_{\nu}}\frac{4 t_{\nu}^{k_{\nu}} (k_{\nu} + 1) (k_{\nu} + 2)}{(2 k_{\nu} + 3)} C_{k_{\mu}}^{3/2}(x_{\mu}) &= \frac{8}{(1 + 2 t_{\nu} x_{\mu} + t_{\nu}^{2})^{3/2}} \\
        &= 4 \sum_{k_{\mu} = 0}^{\infty} (k_{\mu} + 1) (k_{\mu} + 2) C_{k_{\mu}}^{3/2}(x_{\mu}) (-1)^{k_{\mu}} t_{\nu}^{k_{\mu}}.
    \end{align*}
    It remains to compare the coefficients of $C_{k_{\mu}}^{3/2}(x_{\mu}) t_{\nu}^{k_{\nu}}$.
\end{proof}

\subsection{Step 4}

\begin{proof}[Proof of Theorem \ref{thm2}]
    Propositions~\ref{prop1} and \ref{prop3} together verify that $\widetilde{m}_{\mu, \nu}$ is a positive measure for finitely-supported $\mu$ and $\nu$. Let us approximate general $\mu$ and $\nu$ by finitely-supported measures in distribution, to obtain a sequence $\widetilde{m}_{\mu_{n}, \nu_{n}}$ of probability measures with uniformly bounded total measures and supports in $\conv(\supp(\mu)) \times \conv(\supp(\nu))$. The bound is indeed $1/12 \Var(\mu) \Var(\nu)$. Weak-$*$ compactness of measures implies the existence of a limiting measure $\widetilde{m}_{\mu, \nu}$ with the desired properties, at least for polynomial $f$. Approximating again deals with general $f \in C^{4}(\R)$.
\end{proof}

\section{Comments and conjectures}\label{discussion}

\subsection{Finite free convolution and c-convolution}
    The author's original motivation was to understand a variant of inequality (\ref{moment_comparison}) that compares free convolution to \emph{finite free convolution} of polynomials. Finite free convolution gained traction with the work of Marcus, Spielman, and Srivastava \cite{MR4408504}. The finite free additive convolution $p \boxplus_{d} q$ of two degree $d$ polynomials with real roots is also a degree $d$ polynomial with real roots. If $\mu_{p}$ and $\mu_{q}$ are probability measures with equal point masses at the roots of $p$ and $q$ respectively, the distribution $\mu_{p \boxplus_{d} q}$, for large $d$, is close to $\mu_{p} \boxplus \mu_{q}$ in a suitable sense \cite[Corollary 5.3]{MR3741428}. It follows from the work of Marcus, Spielman, and Srivastava \cite[Theorem 4.2]{srivastava2024ramanujan} that the supports of finite free convolution and free convolution can in fact be compared,
    \begin{align*}
        \conv \{x \mid (p \boxplus_{d} q)(x) = 0\} \subset \conv(\supp(\mu_{p} \boxplus \mu_{q})).
    \end{align*}
    Spielman allegedly asked whether this bound can be generalized to all even moments; namely, whether for $k \in \mathbb{N}$,
    \begin{align}\label{nikhil_question}
        \frac{1}{d}\sum_{x : (p \boxplus_{d} q)(x) = 0} x^{2 k} \leq m_{2k}(\mu_{p} \boxplus \mu_{q}).
    \end{align}

    While we were not able to answer this question, it lead to Theorems~\ref{thm1} and \ref{thm2}. It is conceivable that Theorem~\ref{thm1} (and \ref{thm2}) has a variant in the finite free setting.

    \begin{conj}\label{finite_free_4}
        Let $p$ and $q$ be real-rooted polynomials, and $f \in C^{4}(\R)$ with $f^{(4)} \geq 0$. Then,
        \begin{align*}
            \frac{1}{d}\sum_{x : (p \boxplus_{d} q)(x) = 0} f(x) \leq \int f(t) \d{(\mu_{p} \boxplus \mu_{q})}(t).
        \end{align*}
    \end{conj}

    It is possible to modify the arguments of section~\ref{necessity} to show that Conjecture~\ref{finite_free_4} is true up to leading order, and the condition $f^{(4)} \geq 0$ is necessary.

    Another avenue for generalization of our results is towards the so-called $c$-convolution of Mergny and Potters \cite{MR4367356}, $\mu \oplus_{c} \nu$, which convolves two compactly-supported probability measures $\mu$ and $\nu$, and is parametrized by $c \in [0, \infty]$. Classical and free convolutions of $\mu$ and $\nu$ can be recovered as $\mu \oplus_{0} \nu$ and $\mu \oplus_{\infty} \nu$, respectively. While $c$-convolution can be formally defined, either analytically or with suitable cumulants, it is not known whether $\mu \oplus_{c} \nu$ is always a probability measure. We can still make the following conjecture.
    \begin{conj}\label{c_free_4}
        Let $\mu$ and $\nu$ be compactly-supported probability measures, and $f \in C^{4}(\R)$ with $f^{(4)} \geq 0$. Then,
        \begin{align*}
            c \mapsto \int f(t) \d{(\mu \oplus_{c} \nu)}(t)
        \end{align*}
        is decreasing in $c$.
    \end{conj}
    Again, one can show that this conjecture is true to the leading order, and the condition $f^{(4)} \geq 0$ is necessary.

    In fact, $c$-convolution is closely related to finite free convolution, and formally $\mu_{p \boxplus_{d} q} = \mu_{p} \oplus_{-d} \mu_{q}$ \cite{MR4367356}.

\subsection{The 4-ordering of probability measures}

Theorem~\ref{thm1} suggests the following ordering on probability measures.

\begin{defn}
    Let $\mu$ and $\nu$ be probability measures such that
    \begin{align}\label{4_ordering_moment_bound}
        \int |t|^{3} \d{\mu}(t) < \infty, \int |t|^{3} \d{\nu}(t) < \infty.
    \end{align}
    We write $\mu \prec_{4} \nu$
    if, for any $f \in C^{4}(\R)$ with $f^{(4)} \geq 0$ and $f^{(4)}$ compactly supported,
    \begin{align}\label{4_ordering}
        \int f(t) \d{\mu}(t) \leq \int f(t) \d{\nu}(t).
    \end{align}
\end{defn}

Condition (\ref{4_ordering_moment_bound}) ensures that both sides of (\ref{4_ordering}) are integrable.

Theorem~\ref{thm1} is equivalent to the claim that $\mu \boxplus \nu \prec_{4} \mu * \nu$, for compactly supported $\mu$ and $\nu$. The $4$-ordering further satisfies the following.

\begin{prop}\label{4_ordering_classic}
    Let $\mu_{1}, \mu_{2}$, and $\nu$ be compactly-supported probability measures, with $\mu_{1} \prec_{4} \mu_{2}$, then
    \begin{align*}
        \mu_{1} * \nu \prec_{4} \mu_{2} * \nu.
    \end{align*}
\end{prop}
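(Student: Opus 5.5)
The plan is to use Fubini and the translation invariance of the class of test functions. Fix $f \in C^{4}(\R)$ with $f^{(4)} \geq 0$ and $f^{(4)}$ compactly supported. For each $s \in \R$, define the translate $f_{s}(t) := f(t + s)$. Then $f_{s} \in C^{4}(\R)$, $f_{s}^{(4)} = f^{(4)}(\cdot + s) \geq 0$, and $f_{s}^{(4)}$ is still compactly supported. Hence $\mu_{1} \prec_{4} \mu_{2}$ gives, for every $s$,
\begin{align*}
    \int f(t + s) \d{\mu_{1}}(t) \leq \int f(t + s) \d{\mu_{2}}(t).
\end{align*}
Integrating this inequality in $s$ against the positive measure $\nu$, and using the definition of classical convolution, yields
\begin{align*}
    \int f \d{(\mu_{1} * \nu)} = \int \int f(t + s) \d{\mu_{1}}(t) \d{\nu}(s) \leq \int \int f(t + s) \d{\mu_{2}}(t) \d{\nu}(s) = \int f \d{(\mu_{2} * \nu)}.
\end{align*}

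The remaining points are routine. The convolutions $\mu_{i} * \nu$ are compactly supported, so they satisfy the moment condition (\ref{4_ordering_moment_bound}). The integrals are finite and Fubini applies, because $f$ is continuous and hence bounded on the compact set $\supp(\mu_{i}) + \supp(\nu)$. The function $s \mapsto \int f(t + s) \d{\mu_{i}}(t)$ is continuous, hence measurable, so integrating the pointwise inequality in $s$ is legitimate.

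The only step needing care is that the definition of $\prec_{4}$ ranges exactly over a translation-invariant class of test functions. This holds: translation preserves both the sign condition on $f^{(4)}$ and the compactness of its support, so no approximation argument is required.
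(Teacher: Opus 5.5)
Your proof is correct and is essentially the paper's own argument: the paper's proof is a single Fubini identity writing $\int f \, d(\mu_{2} * \nu) - \int f \, d(\mu_{1} * \nu)$ as the $\nu$-average of the differences $\int f(\cdot + s)\, d\mu_{2} - \int f(\cdot + s)\, d\mu_{1}$, each non-negative because translation preserves the test-function class. Your translate $f(t+s)$ is in fact the right form; the paper's display writes $f(t-s)$, which is a sign slip on its part.
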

\begin{proof}
    \begin{align*}
        \int f(t) \d{(\mu_{2} * \nu)}(t) - \int f(t) \d{(\mu_{1} * \nu)}(t) = \int \left(\int f(t - s) \d{\mu_{2}}(s) - \int f(t - s) \d{\mu_{1}}(s) \right) \d{\nu}(t).
    \end{align*}
\end{proof}

\begin{prop}\label{thm1_multiple}
    Let $\mu_{1}, \mu_{2}, \ldots, \mu_{n}$ be compactly-supported probability measures. Then,
    \begin{align*}
        \mu_{1} \boxplus \mu_{2} \boxplus \cdots \boxplus \mu_{n} \prec_{4} \mu_{1} * \mu_{2} * \cdots * \mu_{n}.
    \end{align*}
\end{prop}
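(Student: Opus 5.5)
We argue by induction on $n$, chaining Theorem~\ref{thm1} with Proposition~\ref{4_ordering_classic} and the transitivity of $\prec_{4}$. For $n = 1$ there is nothing to prove, and $n = 2$ is exactly Theorem~\ref{thm1} restated as $\mu_{1} \boxplus \mu_{2} \prec_{4} \mu_{1} * \mu_{2}$. For functions $f$ with $f^{(4)} \geq 0$ and $f^{(4)}$ compactly supported, Theorem~\ref{thm1} applies directly since such $f$ lie in $C^{4}(\R)$.

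Assume the claim for $n - 1$, and set $\sigma = \mu_{1} \boxplus \cdots \boxplus \mu_{n - 1}$ and $\tau = \mu_{1} * \cdots * \mu_{n - 1}$. Both are compactly supported probability measures: $\tau$ is supported in a Minkowski sum of compact sets, and $\sigma$ is compactly supported because it is the law of a sum of bounded self-adjoint operators. Associativity of $\boxplus$, immediate from additivity of free cumulants~(\ref{cumulant_rule_2}), gives
\begin{align*}
    \mu_{1} \boxplus \cdots \boxplus \mu_{n} = \sigma \boxplus \mu_{n} \prec_{4} \sigma * \mu_{n},
\end{align*}
where the comparison is Theorem~\ref{thm1} applied to the pair $(\sigma, \mu_{n})$. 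Next, the induction hypothesis $\sigma \prec_{4} \tau$ combined with Proposition~\ref{4_ordering_classic} (taking $\nu = \mu_{n}$) yields $\sigma * \mu_{n} \prec_{4} \tau * \mu_{n} = \mu_{1} * \cdots * \mu_{n}$.

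It remains to note that $\prec_{4}$ is transitive, which is immediate because it is defined by a family of inequalities~(\ref{4_ordering}) for a fixed class of test functions. Chaining the two comparisons then finishes the induction.

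The only point needing care is that Proposition~\ref{4_ordering_classic} is used within the class of test functions in the definition of $\prec_{4}$. Its proof shows that if $f^{(4)} \geq 0$ with compact support, then the translate $t \mapsto f(t - s)$ stays in that class, so the inner difference is non-negative for each $t$. Integrability is harmless since everything is compactly supported. I expect no real obstacle here; the step most worth double-checking is the compact support of $\sigma$, so that Theorem~\ref{thm1} legitimately applies at each stage.
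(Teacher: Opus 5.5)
Your proposal is correct and follows the paper's proof: induction, with Theorem~\ref{thm1} applied to the pair $(\mu_{1} \boxplus \cdots \boxplus \mu_{n-1}, \mu_{n})$, then the induction hypothesis pushed through Proposition~\ref{4_ordering_classic}, then transitivity of $\prec_{4}$. The paper compresses all of this into one chain of $\prec_{4}$'s. The details you spell out are exactly the ones it leaves implicit: compact support of the partial free convolution, the restricted test-function class being translation invariant, and associativity.
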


\begin{proof}
    Using Theorem~\ref{thm1} and induction,
    \begin{align*}
        (\mu_{1} \boxplus \mu_{2} \boxplus \cdots \boxplus \mu_{n - 1}) \boxplus \mu_{n} \prec_{4} (\mu_{1} \boxplus \mu_{2} \boxplus \cdots \boxplus \mu_{n - 1}) * \mu_{n} \prec_{4} (\mu_{1} * \mu_{2} * \cdots *\mu_{n - 1}) * \mu_{n}.
    \end{align*}
\end{proof}

If we apply Proposition~\ref{thm1_multiple} to $\mu_{1} = \mu_{2} = \ldots = \mu_{n} = \mu$ for a compactly-supported probability measure $\mu$, and consider its large-$n$ asymptotics, the central limit theorem and free central limit theorem \cite[2.1]{MR3585560} imply that, in the limit,
\begin{align*}
    \mu_{sc} \prec_{4} \gamma,
\end{align*}
where $\mu_{sc}$ is the standard semicircular measure, and $\gamma$ is the standard Gaussian.

\begin{kys}
    If $\mu_{1}, \mu_{2}$, and $\nu$ are compactly-supported probability measures such that $\mu_{1} \prec_{4} \mu_{2}$, is $\mu_{1} \boxplus \nu \prec_{4} \mu_{2} \boxplus \nu$?
\end{kys}

What other properties does $\prec_{4}$ have? 

\subsection{Miscellanea}

\begin{itemize}
    \item What happens in Theorems~\ref{thm1} and \ref{thm2} for unbounded $\mu$ and $\nu$? What if $\mu$ and $\nu$ don't have any finite moments?

    \item Does Theorem \ref{thm1} have a variant for free multiplicate convolution, a notion introduced by Voiculescu \cite{MR915507}? More generally, does Corollary \ref{cor1} have variants for other, non-symmetric mixed moments?

    \item Can one interpret the density $\omega_{A, B}$ of Lemma~\ref{weird_moment_lemma} in finite tracial von Neumann algebras? That is, does a positive measure exist with moments given by (\ref{moment_expression})?

    \item Can Proposition~\ref{pointwise_bound} be improved? How about a pointwise bound for $\omega_{A, B}$ when $B$ isn't necessarily rank one?
\end{itemize}

\section{Acknowledgements}

I thank Roland Speicher, Nikhil Srivastava, Joel Tropp, and Jorge Garza-Vargas for helpful discussions and comments, and Jorge Garza-Vargas in particular for introducing me to the conjectural inequality~(\ref{nikhil_question}). Also, shikhin for editing.

\bibliography{refs_paper}

\bibliographystyle{alpha}

\end{document}